\newtheorem{theorem}{Theorem}[section]
\newtheorem{definition}{Definition}[section]
\newtheorem{lemma}{Lemma}[section]
\newtheorem{proposition}[theorem]{Proposition}
\newenvironment{proof}{\medskip\noindent{\bf Proof.}\;}{\null\hfill $\Box$\par\medskip }
\def\rmd{\mathrm{d}}
\def\rme{\mathrm{e}}
\def\rmi{\mathrm{i}}
\def\1{\mathbbm{1}}
\def\gammalim{\overline{\gamma}}
\begin{document}

\begin{frontmatter}

\title{Large scale behavior of wavelet coefficients of non-linear subordinated processes with long memory}

\date{June 3, 2010}

\author[clausel]{Marianne Clausel}
\ead{clausel@univ-paris12.fr}

\author[francois]{Fran\c{c}ois Roueff\corref{cor1}}
\ead{roueff@telecom-paristech.fr}

\author[murad]{Murad~S. Taqqu}
\ead{murad@math.bu.edu}

\author[ciprian]{Ciprian Tudor}
\ead{Ciprian.Tudor@math.univ-lille1.fr}

\cortext[cor1] {Corresponding author. Tel.: +33 1 45 81 78 53; fax: +33 1 45 81
  71 44.}

\address[clausel]{Laboratoire d'Analyse et de Math\'ematiques Appliqu\'ees, UMR
  8050 du CNRS, Universit\'e Paris Est, 61 Avenue du G\'en\'eral de Gaulle,
  94010 Cr\'eteil Cedex, France.}
\address[francois]{Institut Telecom, Telecom Paris, CNRS LTCI, 46 rue Barrault,
  75634 Paris Cedex 13, France}

\address[murad]{Departement of Mathematics and Statistics, Boston University,
  Boston, MA 02215, USA}

\address[ciprian]{Laboratoire Paul Painlev\'e, UMR 8524 du CNRS, Universit\'e
  Lille 1, 59655 Villeneuve d'Ascq, France. Associate member: SAMM,
  Universit{\'e} de Panth\'eon-Sorbonne Paris 1.}

\begin{abstract}
We study the asymptotic behavior of wavelet coefficients of random processes
with long memory. These processes may be stationary or not and are obtained as
the output of non--linear filter with Gaussian input. The wavelet coefficients
that appear in the limit are random, typically non--Gaussian and belong to a
Wiener chaos. They can be interpreted as wavelet coefficients of a generalized
self-similar process.
\end{abstract}

\begin{keyword}
Hermite processes \sep Wavelet coefficients \sep Wiener chaos \sep 
self-similar processes \sep Long--range dependence.
\MSC Primary  42C40 \sep 60G18 \sep 62M15 \sep Secondary: 60G20, 60G22
\end{keyword}

\end{frontmatter}

\section{Introduction}\label{sec:intro}
Let $X=\{X_n\}_{n\in\mathbb{Z}}$ be a stationary Gaussian process with mean zero, unit variance and spectral density $f(\lambda),\lambda\in (-\pi,\pi]$ and thus covariance equal to
\[
r(n)=\mathbb{E}(X_0 X_n)=\int_{-\pi}^{\pi}\rme^{\rmi n\lambda}f(\lambda)\rmd \lambda\;.
\]
The process $\{X_n\}_{n\in\mathbb{Z}}$ is said to have {\it short memory} or
{\it short--range dependence} if $f(\lambda)$ is bounded around $\lambda=0$ and
{\it long memory} or {\it long--range dependence} if $f(\lambda)\to\infty$ as
$\lambda\to 0$. We will suppose that $\{X_n\}_{n\in\mathbb{Z}}$ has
long--memory with memory parameter $d>0$, that is,
\[
f(\lambda)\sim |\lambda|^{-2d}f^*(\lambda)\mbox{ as }\lambda \to 0
\]
where $f^*(\lambda)$ is a bounded spectral density which is continuous and
positive at the origin. It is convenient to interpret this behavior as the
result of a fractional integrating operation, whose transfer function reads
$\lambda\mapsto (1-\rme^{-\rmi\lambda})^{-d}$. Hence we set
\begin{equation}\label{e:sdf}
f(\lambda)=|1-\rme^{-\rmi\lambda}|^{-2d}f^*(\lambda),\quad\lambda\in (-\pi,\pi]\;.
\end{equation}

We relax the above assumptions in two ways  :
\begin{enumerate}
\item Consider, instead of the Gaussian process $\{X_n\}_{n\in\mathbb{Z}}$ the
  non--Gaussian process $\{G(X_n)\}_{n\in\mathbb{Z}}$ where $G$ is a non--linear
  filter such that $\mathbb{E}[G(X_n)]=0$ and
  $\mathbb{E}[G(X_n)^2]<\infty$. The non--linear process
  $\{G(X_n)\}_{n\in\mathbb{Z}}$ is said
  to be subordinated to the Gaussian process $\{X_n\}_{n\in\mathbb{Z}}$.
\item Drop the stationarity assumption by considering a process
  $\{Y_{n}\}_{n\in\mathbb{Z}}$ which becomes stationary when differenced $K\geq
  0$ times.
\end{enumerate}
We shall thus consider $\{Y_n\}_{n\in\mathbb{Z}}$ such that 
\[
\left(\Delta^K Y\right)_{n}=G(X_n),\quad n\in\mathbb{Z}\;,
\]
where $(\Delta Y)_n=Y_{n}-Y_{n-1}$ and where $\{X_n\}_{n\in\mathbb{Z}}$ is
Gaussian with spectral density $f$ satisfying~(\ref{e:sdf}).

Since $Y=\{Y_n\}_{n\in\mathbb{Z}}$ is random so will be its wavelet
coefficients $\{W_{j,k},\,j\geq 0,\,k\in\mathbb{Z}\}$ which are defined below. Our
goal is to find the distribution of the wavelet coefficients at large scales
$j\to\infty$. This is an important step in developing methods for estimating
the underlying long memory parameter $d$. The large scale behavior of the
wavelet coefficients was studied in \cite{moulines:roueff:taqqu:2007:jtsa} in
the case where there was no filter $G$, that is, when $Y$ is a Gaussian process
such that $\Delta^K Y=X$, and also in the case where $Y$ is a non--Gaussian
linear process (see \cite{roueff-taqqu-2009}).

We obtain our random wavelet coefficients by using more general linear filters
that those related to multiresolution analysis (MRA) (see for
e.g.~\cite{meyer:1990}, \cite{mallat:1998}). In practice, however, the methods
are best implemented using Mallat's algorithm and a MRA. Our filters are
denoted $h_j$ where $j$ is the scale and we use a scaling factor
$\gamma_j\uparrow \infty$ as $j\uparrow\infty$. In the case of a MRA,
$\gamma_j=2^j$ and $h_j$ are generated by a (low pass) scaling filter and its
corresponding quadratic (high pass) mirror filter. More generally one can use a
scaling function $\varphi$ and a mother wavelet $\psi$ to generate the random
wavelet coefficients by setting
\begin{equation}
  \label{eq:classic-wav-set}
  W_{j,k}=\int_{\mathbb{R}}\psi_{j,k}(t)\left(\sum_{\ell\in\mathbb{Z}}\varphi(t-\ell)Y_{\ell}\right)\rmd t,  
\end{equation}
where $\psi_{j,k}=2^{-j/2}\psi(2^{-j}t-k),\,j\geq 0$. Observe that we use here
the engineering convention that large values of $j$ correspond to large scales
and hence low frequencies. If $\varphi$ and $\psi$ have compact support then
the corresponding filters $h_j$ have finite support of size $O(2^j)$. For more
details on related conditions on $\varphi$ and $\psi$
(see~\cite{moulines:roueff:taqqu:2007:jtsa}).

The idea of using wavelets to estimate the long memory coefficient $d$ goes
back to Wornell and al.~(\cite{wornell:oppenheim:1992}) and
Flandrin~(\cite{flandrin:1989a,flandrin:1989b,flandrin:1991a,flandrin:1999}). See
also  Abry and al.~(\cite{abry:veitch:1998,abry:veitch:flandrin:1998}). Those
methods are an alternative to the Fourier methods developed by Fox and
Taqqu~(\cite{fox:taqqu:1986}) and
Robinson~(\cite{robinson:1995:GPH,robinson:1995:GSE}. For a general comparison
of Fourier and wavelet approach, see
\cite{fay-mouline-roueff-taqqu-2009}. The  case of the Rosenblatt
process, which is the Hermite process of order $q=2$, was studied
by~\cite{bardet:tudor:2009}.

The paper is structured as follows. In Section~\ref{s:assump}, we introduce the
wavelet filters. The processes are defined in Section~\ref{sec:intrep} using
integral representations and Section~\ref{sec:wienerchaos} presents the
so--called Wiener chaos decomposition. The main result and its interpretations
is given in Section~\ref{sec:WC}. It is proved in
Section~\ref{sec:proofth}. Auxiliary lemmas are presented and proved in
Sections~\ref{sec:appendix} and \ref{sec:techlemma}.

\section{Assumptions on the wavelet filter}\label{s:assump}

The wavelet transform of $Y$ involves the application of a linear filter
$h_j(\tau),\tau\in\mathbb{Z}$, at each scale $j\geq 0$. We shall characterize
the filters $h_j$ by their discrete Fourier transform~:
\[
\widehat{h}_j(\lambda)=\sum_{\tau\in\mathbb{Z}}h_j(\tau) \rme^{-\rmi \lambda\tau },\,\lambda\in [-\pi,\pi]\;.
\]
Assumptions on $\widehat{h}_j$ are stated below. The resulting wavelet coefficients are defined as
\[
W_{j,k}=\sum_{\ell\in\mathbb{Z}}h_j(\gamma_j k-\ell)Y_{\ell},
\quad j\geq0,\;k\in\mathbb{Z}\;,
\]
where $\gamma_j\uparrow \infty$ is a sequence of non--negative scale factors
applied at scale $j$, for example $\gamma_j=2^j$. We will assume that for any
$m\in\mathbb{Z}$,
\begin{equation}\label{eq:scale}
\lim_{j\to\infty}\frac{\gamma_{j+m}}{\gamma_j}=\gammalim_m>0\;.
\end{equation}

As noted, in this paper, we do not assume that the wavelet coefficients are
orthogonal nor that they are generated by a multiresolution analysis. Our
assumptions on the filters $h_j$ are as follows~:
\begin{description}
\item {a. }\underline{Finite support}: For each $j$,
  $\{h_j(\tau)\}_{\tau\in\mathbb{Z}}$ has finite support.
\item {b. }\underline{Uniform smoothness}: There exists $M\geq K$, $\alpha>1/2$
  and $C>0$ such that for all $j\geq0$ and $\lambda\in [-\pi,\pi]$,
    \begin{equation}\label{EqMajoHj}
    |\widehat{h}_j(\lambda)|\leq \frac{C\gamma_j^{1/2}|\gamma_j\lambda|^M}{(1+\gamma_j|\lambda|)^{M+\alpha}}\;.
    \end{equation}
By $2\pi$-periodicity of $\widehat{h}_j$ this inequality can be extended to
$\lambda\in\mathbb{R}$ as
\begin{equation}\label{EqMajoHjR}
|\widehat{h}_{j}(\lambda)|\leq C
\frac{\gamma_j^{1/2}|\gamma_j\{\lambda\}|^M}
{(1+\gamma_j|\{\lambda\}|)^{\alpha+M}}\;.
\end{equation}
where $\{\lambda\}$ denotes the element of $(-\pi,\pi]$
such that $\lambda-\{\lambda\}\in2\pi\mathbb{Z}$.
\item {c. }\underline{Asymptotic behavior}: There exists some non identically
  zero function $ \widehat{h}_{\infty}$ such that for any
  $\lambda\in\mathbb{R}$,
\begin{equation}\label{EqLimHj}
\lim_{j \to +\infty}(\gamma_j^{-1/2}\widehat{h}_{j}(\gamma_j^{-1}\lambda))= \widehat{h}_{\infty}(\lambda)\;.
\end{equation}
\end{description}
Observe that while $\widehat{h}_j$ is $2\pi$-periodic, the function
$\widehat{h}_{\infty}$ is a non-periodic function on $\mathbb{R}$ (this
follows from~(\ref{EqMajoHinf}) below). For the connection between these
assumptions on $h_j$ and corresponding assumptions on the scaling function
$\varphi$ and the mother wavelet $\psi$ in the classical wavelet
setting~(\ref{eq:classic-wav-set})
(see~\cite{moulines:roueff:taqqu:2007:jtsa}). In particular, in that case, one
has $\widehat{h}_{\infty}=\widehat{\varphi}(0)\overline{\widehat{\psi}}$.

Our goal is to study the large scale behavior of the random wavelet coefficients
\begin{equation}\label{e:W1}
W_{j,k}=
\sum_{\ell\in\mathbb{Z}}h_j(\gamma_j k-\ell)Y_{\ell}
=\sum_{\ell\in\mathbb{Z}}h_j(\gamma_j k-\ell)\left(\Delta^{-K}G(X)\right)_{\ell},
\end{equation}
where we set symbolically 
$Y_{\ell}=\left(\Delta^{-K}G(X)\right)_{\ell}$ 
for $(\Delta^K Y)_{\ell}=G(X_{\ell})$.

By Assumption~(\ref{EqMajoHj}), $h_j$ has null moments up to order $M-1$, that
is, for any $m\in\{0,\cdots,M-1\}$,
\begin{equation}\label{EqMom1}
\sum_{\ell\in \mathbb{Z}}h_j(\ell)\ell^m =0\;.
\end{equation}
Therefore, since $M\geq K$, $\widehat{h}_j$ can be expressed as
\begin{equation}\label{EqHjkVSHj}
\widehat{h}_j(\lambda)=(1-\rme^{-{\rmi}\lambda})^K \widehat{h}_j^{(K)}(\lambda),
\end{equation}
where $\widehat{h}_j^{(K)}$ is also a trigonometric polynomial of the form
\begin{equation}\label{EqHjk}
\widehat{h}_j^{(K)}(\lambda)
=\sum_{\tau\in\mathbb{Z}} h_j^{(K)}(\tau)\rme^{-\rmi\lambda\tau},
\end{equation}
since $h_j^{(K)}$ has finite support for any $j$. Then
we obtain another way of expressing $W_{j,k}$, namely,
\begin{equation}\label{e:W2}
W_{j,k}=\sum_{\ell\in\mathbb{Z}}h_j^{(K)}(\gamma_j k-\ell)G(X_{\ell})\;.
\end{equation}
We have thus incorporated the linear filter $\Delta^{-K}$ in~(\ref{e:W1}) into the filter $h_j$ and denoted the new filter $h_j^{(K)}$.\\

{\bf Remarks}
\begin{enumerate}
\item Since $\{G(X_\ell),\ell\in\mathbb{Z}\}$ is stationary,
  it follows from~(\ref{e:W2}) that $\{W_{j,k},k\in\mathbb{Z}\}$ is stationary
  for each scale $j$. 
\item Observe that $\Delta^K Y$ is centered by definition. However, by
  (\ref{EqMom1}), the definition of $W_{j,k}$ only depends on $\Delta^M Y$. In
  particular, provided that $M\geq K+1$, its value is not modified if a
  constant is added to $\Delta^K Y$, whenever $M\geq K+1$.
\item\label{remitem:TFhinfty} Assumptions (\ref{EqMajoHj}) and~(\ref{EqLimHj})
  imply that for any $\lambda\in\mathbb{R}$,
\begin{equation}\label{EqMajoHinf}
|\widehat{h}_{\infty}(\lambda)|\leq C\frac{|\lambda|^M}{(1+|\lambda|)^{\alpha+M}}\;.
\end{equation}
Hence $\widehat{h}_{\infty}\in L^{2}(\mathbb{R})$ since $\alpha>1/2$. 
\item The Fourier transform of $f$,
\begin{equation}\label{eq:TFdef}
\mathfrak{F}(f)(\xi)=\int_{\mathbb{R}^q} f(t)\rme^{-\rmi t^T\xi}\;\rmd^q t,\quad \xi\in\mathbb{R}^q\;,
\end{equation}
is defined for any $f\in L^2(\mathbb{R}^{q},\mathbb{C})$. We let $h_{\infty}$ be the
$L^2(\mathbb{R})$ function such that $\widehat{h}_{\infty}=\mathfrak{F}[h_{\infty}]$.
\end{enumerate}

\section{Integral representations}\label{sec:intrep}
It is convenient to use an integral representation in the spectral domain to
represent the random processes (see for
example~\cite{major:1984,nualart:2006}). The stationary Gaussian process
$\{X_k,k\in\mathbb{Z}\}$ with spectral density~(\ref{e:sdf}) can be written as
\begin{equation}\label{e:intrepX}
X_\ell=\int_{-\pi}^{\pi}\rme^{\rmi\lambda
  \ell}f^{1/2}(\lambda)\rmd\widehat{W}(\lambda)=\int_{-\pi}^{\pi}\frac{\rme^{\rmi\lambda
    \ell}f^{*1/2}(\lambda)}{|1-\rme^{-{\rmi}\lambda}|^d}\rmd\widehat{W}(\lambda),\quad\ell\in\mathbb{N}\;.
\end{equation}
This is a special case of
\begin{equation}\label{e:int}
\widehat{I}(g)=\int_{\mathbb{R}}g(x)\rmd\widehat{W}(x),
\end{equation}
where $\widehat{W}(\cdot)$ is a complex--valued Gaussian random measure satisfying
\begin{eqnarray}\label{e:w1}
\mathbb{E}(\widehat{W}(A))&=&0\quad\text{ for every Borel set $A$ in }\mathbb{R}\;,\\
\label{e:w2}
\mathbb{E}(\widehat{W}(A)\overline{\widehat{W}(B)})&=&|A\cap B|
\text{ for every Borel sets $A$ and $B$ in }\mathbb{R}\;,\\
\label{e:w3}
\sum_{j=1}^{n}\widehat{W}(A_j)&=&\widehat{W}(\bigcup_{j=1}^n A_j)\text{ if
}A_1,\cdots,A_n\mbox{ are disjoint Borel sets in }\mathbb{R}\;,\\
\label{e:w4}
\widehat{W}(A)&=&\overline{\widehat{W}(-A)}\quad\text{ for every Borel set $A$ in }\mathbb{R}\;.
\end{eqnarray}
The integral~(\ref{e:int}) is defined for any function $g\in L^2(\mathbb{R})$ and one has the isometry
\[
\mathbb{E}(|\widehat{I}(g)|^2)=\int_{\mathbb{R}}|g(x)|^2\rmd x\;.
\]
The integral $\widehat{I}(g)$, moreover, is real--valued if
\[
g(x)=\overline{g(-x)}\;.
\]

We shall also consider multiple It\^{o}--Wiener integrals
\[
\widehat{I}_q(g)=\int^{''}_{\mathbb{R}^q}g(\lambda_1,\cdots,\lambda_q)\rmd \widehat{W}(\lambda_1)\cdots\rmd\widehat{W}(\lambda_q)
\]
where the double prime indicates that one does not integrate on hyperdiagonals $\lambda_i=\pm
\lambda_j,i\neq j$. The integrals $\widehat{I}_q(g)$ are handy because we will be able to expand our non--linear functions $G(X_k)$ introduced in Section~\ref{sec:intro} in multiple integrals of this type.

These multiples integrals are defined as follows. Denote by
$\overline{L^2}(\mathbb{R}^{q},\mathbb{C})$ the space of complex valued
functions defined on $\mathbb{R}^{q}$ satisfying
\begin{gather}\label{e:antisym}
g(-x_{1},\cdots,-x_{q})= \overline{g(x_{1},\cdots, x_{q})}\mbox{ for }(x_{1},\cdots, x_{q}) \in \mathbb{R}^q\;,\\
\label{e:fL2}
\Vert g\Vert ^{2} _{L^{2}}:= \int_{\mathbb{R}^{q}} \left| g(x_{1},\cdots, x_{q}) \right| ^{2} \rmd x_{1}\cdots \rmd x_{q} <\infty\;.
\end{gather}
Let $\tilde{L}^2(\mathbb{R}^{q},\mathbb{C})$ denote the set of functions in
$\overline{L^2}(\mathbb{R}^{q},\mathbb{C})$ that are symmetric in the sense
that $g=\tilde{g}$ where $\tilde{g} (x_{1},\cdots, x_{q}) =1/q!\sum_{\sigma}
g(x_{\sigma (1)},\cdots, x_{\sigma (q)})$, where the sum is over all
permutations of $\{1,\dots,q\}$.  One defines now the multiple integral with
respect to the spectral measure $\widehat{W}$ by a density argument. For a step
function of the form
\begin{equation*}
g = \sum_{j_{\ell}=\pm 1,\cdots, \pm N} c_{j_{1},\cdots, j_{n}} 1_{\Delta _{j_{1}} }\times\cdots\times 1_{\Delta _{j_{n}} }
\end{equation*}
where the $c$'s are real--valued, $\Delta _{j_{\ell}}= -\Delta _{-j_{\ell}}$ and $\Delta _{j_{\ell}}\cap \Delta _{j_{m}}=\emptyset$ if $\ell\not= m$, one sets
\begin{equation}
\label{hatI1}
\widehat{I}_{q} (g)= \sum_{j_{\ell}=\pm 1,\cdots, \pm N} {\!\!\!\!\!\!\!\!\!\!}^{''} c_{j_{1},\cdots, j_{n}}\widehat{W}(\Delta _ {j_{1}})\cdots \widehat{W}(\Delta _{j_{n}})\;.
\end{equation}
Here, $\sum^{\prime\prime}$ indicates that one does not sum over the
hyperdiagonals, that is, when $j_{\ell}=\pm j_{m}$ for $\ell\neq m$. The
integral $\widehat{I}_q$ verifies that
\begin{equation}\label{e:cov-multiple}
\mathbb{E}(\widehat{I}_{q}(g_1) \widehat{I}_{q'}(g_2))=
\left\{
\begin{array}{l}
q! \langle g_1, g_2\rangle _{L^{2}}, \mbox{ if } q=q'\\
0, \mbox{ if } q\neq q'.
\end{array}
\right.
\end{equation}
Observe, moreover, that for every step function $g$ with $q$ variables as above
$$
\widehat{I}_{q}(g)= \widehat{I}_{q}( \tilde{g}).
$$
Since the set of step functions is dense in
$\overline{L^2}(\mathbb{R}^{q},\mathbb{C})$, one can extend $\widehat{I}_{q}$
to an isometry from $\overline{L^2}(\mathbb{R}^{q},\mathbb{C})$ to
$L^{2}(\Omega)$ and the
above properties hold true for this extension.\\

\noindent{\bf Remark.} Property~(\ref{e:antisym}) of the function $f$ in $\overline{L^2}(\mathbb{R}^{q},\mathbb{C})$ together with Property~(\ref{e:w4}) of $\widehat{W}$ ensure that $\widehat{I}_{q}(f)$ is a real--valued random variable.

\section{Wiener Chaos}\label{sec:wienerchaos}

Our results are based on the expansion of the function $G$, introduced in Section~\ref{sec:intro}, in Hermite polynomials. The Hermite polynomials are
\begin{equation*}
H_{q}(x)= (-1)^{q}e^{\frac{x^{2}}{2}}\frac{d^{q}}{dx^{q}}\left( e^{-\frac{x^{2}}{2}}\right)\;,
\end{equation*}
in particular, $H_0(x)=1,H_1(x)=x,H_2(x)=x^2-1$. If $X$ is a normal random variable with mean $0$ and variance $1$, then
\[
\mathbb{E}(H_q(X)H_{q'}(X))=\int_\mathbb{R}H_q(x)H_{q'}(x)\frac{1}{\sqrt{2\pi}}\rme^{-x^2/2}\rmd x=q!\delta_{q,q'}\;.
\]
Moreover,
\begin{equation}\label{e:chaos-exp}
G(X)=\sum_{q=1}^{+\infty}\frac{c_q}{q!} H_q(X)\;,
\end{equation}
where the convergence is in $L^2(\Omega)$ and where
\begin{equation}\label{e:cq}
c_q=\mathbb{E}(G(X)H_q(X))\;.
\end{equation}
The expansion~(\ref{e:chaos-exp}) is called a Wiener chaos expansion with each
term in the chaos expansion living in a different chaos. The
expansion~(\ref{e:chaos-exp}) starts at $q=1$, since
\[
c_0=\mathbb{E}(G(X)H_0(X))=\mathbb{E}(G(X))=0\;,
\]
by assumption. The condition $\mathbb{E}(G(X)^2)<\infty$ implies
\begin{equation}\label{e:summability}
\sum_{q=1}^{+\infty}\frac{c_q^2}{q!}<\infty\;.
\end{equation}

Hermite polynomials are related to multiple integrals as follows : if
$X=\int_{\mathbb{R}}g(x)\rmd\widehat{W}(x)$ with
$\mathbb{E}(X^2)=\int_{\mathbb{R}}|g(x)|^2\,\rmd x=1$ and $g(x)=\overline{g(-x)}$
so that $X$ has unit variance and is real--valued, then
\begin{equation}\label{e:herm-integ}
H_q(X)=\widehat{I}_q(g^{\otimes q})=\int_{\mathbb{R}^q}^{''}g(x_1)\cdots g(x_q)\rmd \widehat{W}(x_1)\cdots\rmd\widehat{W}(x_q)\;.
\end{equation}

The expansion~(\ref{e:chaos-exp}) of $G$ induces a corresponding expansion of the wavelet coefficients $W_{j,k}$, namely,
\begin{equation}
  \label{eq:Wjk}
W_{j,k}=\sum_{q=1}^{+\infty}\frac{c_q}{q!} W_{j,k}^{(q)}\;,
\end{equation}
where by~(\ref{e:W2}) one has
\begin{equation}
  \label{eq:Wjkq}
W_{j,k}^{(q)}=\sum_{\ell\in\mathbb{Z}}h_j^{(K)}(\gamma_j k-\ell)H_q(X_{\ell})\;.
\end{equation}

The Gaussian sequence $\{X_{n}\}_{n\in\mathbb{Z}}$ is long--range dependent
because its spectrum at low frequencies behaves like $|\lambda|^{-2d}$ with
$d>0$ and hence explodes at $\lambda=0$. What about the processes
$\{H_q(X_{\ell})\}_{\ell}$ for $q\geq 2$? What is the behavior of the spectrum
at low frequencies? Does it explodes at $\lambda=0$? The answer depends on the
respective values of $q$ and $d$.  Let us define
\begin{equation}\label{e:qcDef}
q_c=\max\{ q\in\mathbb{N}~:~  q<1/(1-2d)\}\;,
\end{equation}
and
\begin{equation}\label{e:ldparamq}
d(q)=qd+(1-q)/2\;.
\end{equation}
One has
\begin{equation}\label{e:dq>0}
d(q)>0\quad\mbox{ if }q\leq q_c,\quad\mbox{ that is if }q<1/(1-2d)\;.
\end{equation}
The following result shows that the spectral density of
$\{H_q(X_\ell)\}_{\ell\in\mathbb{Z}}$ has a different behavior at zero
frequency depending on whether $q\leq q_c$ or $q> q_c$. It is long--range
dependent when $q\leq q_c$ and short--range dependent when $q>q_c$. We first
give a definition.
\begin{definition}
  The convolution of two locally integrable $(2\pi)$-periodic functions $g_1$
  and $g_2$ is defined as
\begin{equation}\label{eq:defconv}
(g_1\star g_2)(\lambda)=\int_{-\pi}^{\pi}g_1(u)g_2(\lambda-u)\rmd u\;.
\end{equation}
Moreover the $q$ times self-convolution of $g$ is denoted by $g^{(\star q)}$. 
\end{definition}
\begin{lemma}\label{lem:convol_spectraldensity}
  Let $q$ be a positive integer. The spectral density of
  $\{H_q(X_{\ell})\}_{\ell\in\mathbb{Z}}$ is
$$
q!f^{(\star q)}=q!(f\star\dots\star f)\;,
$$
where the spectral density $f$ of $\{X_{\ell}\}_{\ell\in\mathbb{Z}}$
is given in~(\ref{e:sdf}).
Moreover the following holds :
\begin{enumerate}[(i)]
\item If $q\leq q_c$, then $\lambda^{2d(q)}f^{(\star q)}(\lambda)$ is bounded on
  $\lambda\in(0,\pi)$ and converges to a positive number as $\lambda\downarrow0$.
\item If $q> q_c$, then $f^{(\star q)}(\lambda)$ is bounded on
  $\lambda\in(0,\pi)$ and converges to a positive number
  as $\lambda\downarrow0$.
\end{enumerate}
Hence if $q\leq q_c$, $\{H_q(X_\ell)\}_{\ell}$ has long memory with parameter
$d(q)>0$ whereas if $q>q_c$, $\{H_q(X)\}_{\ell}$ has a short--memory behavior.
\end{lemma}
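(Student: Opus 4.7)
My plan is to split the statement into two parts: the identification of the spectral density as $q!\,f^{(\star q)}$, and the analysis of its behavior near $\lambda=0$.

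For the first part, I would rely on the classical Hermite product formula for jointly Gaussian standardised random variables: if $(X_0,X_n)$ is centred Gaussian with unit variance and covariance $r(n)=\mathbb{E}(X_0X_n)$, then
\begin{equation*}
\mathbb{E}\bigl(H_q(X_0)H_q(X_n)\bigr)=q!\,r(n)^q\;.
\end{equation*}
This can be obtained directly, or deduced from the isometry~(\ref{e:cov-multiple}) applied to the representation~(\ref{e:herm-integ}) of $H_q(X_\ell)$ as $\widehat{I}_q\!\left((\mathrm{e}^{\rmi\lambda\ell}f^{1/2})^{\otimes q}\right)$. Since $r(n)$ is the $n$-th Fourier coefficient of the $(2\pi)$-periodic function $f$, the product $r(n)^q$ is the $n$-th Fourier coefficient of the $q$-fold periodic convolution $f^{(\star q)}$ defined in~(\ref{eq:defconv}). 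Integrability of $f^{(\star q)}$ on $(-\pi,\pi]$ follows from $r(0)^q=1<\infty$, and this identifies $q!\,f^{(\star q)}$ as the spectral density of $\{H_q(X_\ell)\}_\ell$.

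For the behaviour at zero, I would argue by induction on $q$ using~(\ref{e:sdf}). Write $f(\lambda)=|1-\mathrm{e}^{-\rmi\lambda}|^{-2d}f^*(\lambda)$ and note that $|1-\mathrm{e}^{-\rmi\lambda}|\asymp|\lambda|$ on $(-\pi,\pi]$, while $f^*$ is continuous and positive at $0$. The case $q=1$ is the assumption. For the induction step, I would split
\begin{equation*}
f^{(\star (q+1))}(\lambda)=\int_{-\pi}^{\pi}f^{(\star q)}(u)\,f(\lambda-u)\,\rmd u
\end{equation*}
into a neighbourhood of the origin and its complement. On the complement the integrand is bounded, while on a neighbourhood of $0$ I would use the inductive estimate $f^{(\star q)}(u)\asymp |u|^{-2d(q)}$ together with $f(\lambda-u)\asymp |\lambda-u|^{-2d}$. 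When $2d(q)+2d>1$, i.e.\ when $q+1\leq q_c$, the substitution $u=|\lambda|v$ and dominated convergence produce, as $\lambda\to 0$,
\begin{equation*}
f^{(\star(q+1))}(\lambda)\sim f^*(0)^{q+1}|\lambda|^{1-2d(q)-2d}\int_{\mathbb{R}}|v|^{-2d(q)}|1-v|^{-2d}\,\rmd v\;,
\end{equation*}
recognising $1-2d(q)-2d=-2d(q+1)$ and noting that the Beta-type integral converges and is positive. When $2d(q)+2d\leq 1$, i.e.\ when $q+1>q_c$, the same splitting shows that the convolution is bounded and converges to a positive limit. This gives~(i) and~(ii). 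The concluding statement about long vs.\ short memory is then immediate from the definition.

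The main obstacle is the careful handling of the convolution near zero in the inductive step: one needs both an upper bound valid uniformly in $\lambda$ and the identification of the precise limit of $\lambda^{2d(q+1)}f^{(\star(q+1))}(\lambda)$. The borderline situation $2d(q)+2d\approx 1$ requires attention because the Beta-type integral is only conditionally convergent at the endpoints; however, since $q_c$ is defined by a strict inequality in~(\ref{e:qcDef}), the induction stays strictly inside the convergent regime on one side and strictly inside the bounded regime on the other, so no borderline case actually arises.
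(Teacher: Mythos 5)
Your proposal follows essentially the same route as the paper: the identity $\mathbb{E}(H_q(X_0)H_q(X_n))=q!\,r(n)^q$ combined with the fact that Fourier coefficients turn periodic convolution into products identifies the spectral density as $q!f^{(\star q)}$, and the behaviour at the origin is then obtained by induction on $q$, the inductive step being exactly the content of the paper's Lemma~\ref{lem:convol_powerlaw} on convolutions of periodic power-law singularities (applied with $\beta_1=2d(q)$, $\beta_2=2d$, so that $\beta_1+\beta_2-1=2d(q+1)$). Your inline treatment of the convolution near zero --- splitting off a neighbourhood of the origin, substituting $u=|\lambda|v$, dominated convergence towards a Beta-type integral --- is the same computation the paper isolates in that auxiliary lemma.

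One assertion in your closing paragraph is, however, incorrect. The strict inequality in the definition of $q_c$ in~(\ref{e:qcDef}) guarantees $d(q)>0$ for $q\le q_c$, but it does \emph{not} guarantee $d(q)<0$ for $q>q_c$: when $1/(1-2d)$ is an integer one has $q_c+1=1/(1-2d)$, hence $d(q_c+1)=0$ and $\beta_1+\beta_2=2d(q_c)+2d=1$ exactly. In that borderline case the Beta-type integral $\int_{\mathbb{R}}|v|^{-2d(q_c)}|1-v|^{-2d}\,\rmd v$ diverges at infinity, and $f^{(\star(q_c+1))}$ in fact acquires a logarithmic singularity at the origin, so it is not bounded; your claim that ``no borderline case actually arises'' is therefore false for those values of $d$ (and your statement that $2d(q)+2d\le 1$ implies boundedness of the convolution fails at equality). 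To be fair, the paper's Lemma~\ref{lem:convol_powerlaw} only treats $\beta_1+\beta_2<1$ and $\beta_1+\beta_2>1$, so assertion (ii) of the present lemma has the same blind spot there; but since you explicitly raised and then dismissed the borderline, you should either restrict to $1/(1-2d)\notin\mathbb{N}$ or treat the logarithmic case separately. Everything else in your plan is sound and matches the paper's argument.
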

\begin{proof}
By definition of $H_q$ and since $X$ has unit variance by assumption, we have
$$
\mathbb{E}(H_q(X_{\ell})H_q(X_{\ell+m}))
=q!\left(\int_{-\pi}^{\pi}f(\lambda)\rme^{\rmi \lambda m}\rmd\lambda\right)^q\;.
$$
Using the fact that, for any two locally integrable $(2\pi)$-periodic functions
$g_1$ and $g_2$, one has
$$
\int_{-\pi}^{\pi}(g_1\star g_2)(\lambda)\rme^{\rmi \lambda m}\rmd\lambda
=\int_{-\pi}^{\pi}g_1(u)\rme^{\rmi u m}\rmd u
\times\int_{-\pi}^{\pi}g_2(v)\rme^{\rmi v m}\rmd v\;,
$$
we obtain that the spectral density of $\{H_q(X_\ell)\}_{\ell}$ is $q!f^{(\star
  q)}$.  

The properties of $f^{(\star q)}$ stated in
Lemma~\ref{lem:convol_spectraldensity} are proved by induction on $q$ using
Lemma~\ref{lem:convol_powerlaw}. Observe indeed that if $\beta_1=d(q)$ and
$\beta_2=2d$, then
\[
\beta_1+\beta_2-1=2d(q)+2d-1=(2dq+1-q)+2d-1=2(q+1)d-(q+1)+1=2d(q+1)\;.
\]
\end{proof}

Now, consider the expansion of $\Delta^K Y_{\ell}=G(X_{\ell})=\sum_{q=q_0}^{+\infty}(c_q/q!) H_q(X_{\ell})$, where
\begin{equation}
  \label{e:hermiterank}
q_0=\min\{q\geq1,\,c_q\neq 0\}\;.
\end{equation}
The exponent $q_{0}$ is called the {\it Hermite rank} of $\Delta^K Y$. 

In the following, we always assume that at least one summand of $\Delta^K Y_{\ell}$ has long memory, that is, in view of Lemma~\ref{lem:convol_spectraldensity},
\begin{equation}\label{e:longmemorycondition}
q_0\leq q_c \;.
\end{equation}

\section{The result and its interpretations}\label{sec:WC}

In this section we describe the limit in distribution of the wavelet
coefficients $\{W_{j+m,k}\}_{m,k}$ as $j\to\infty$, adequately normalized, and
we interpret the limit. Recall that $W_{j+m,k}$ involves a sum of chaoses of
all order. In the limit, however, only the order $q_0$ will prevail. The
convergence of finite--dimensional distributions is denoted by
$\overset{\text{\tiny{fidi}}}{\rightarrow}$.

\begin{theorem}\label{thm:AsympDistWjk}
As $j\to\infty$, we have
\begin{equation}\label{eq::AsympDistWjk}
\left\{\gamma_{j}^{-(d(q_0)+K)}W_{j+m,k},\, m,k\in
  \mathbb{Z}\right\} \overset{\text{\tiny{fidi}}}{\rightarrow} c_{q_0}\,
(f^*(0))^{q_0/2} \; \left\{ Y^{(q_0,K)}_{m,k},\, m,k\in \mathbb{Z}\right\}\;,
\end{equation}
where for every positive integer  $q$,
\begin{equation}\label{yq}
  Y^{(q,K)}_{m,k}=
  (\gammalim_m)^{1/2}\,
  \int_{\mathbb{R}^{q}}^{''}\frac{\rme^{\rmi k\gammalim_{m}(\zeta_1+\cdots+\zeta_q)}}{(\rmi(\zeta_1+\cdots+\zeta_q))^K}\;
\frac{\widehat{h}_{\infty}(\gammalim_m(\zeta_1+\cdots+\zeta_q))}
{|\zeta_1|^{d}\cdots|\zeta_q|^{d}}
\;\rmd\widehat{W}(\zeta_1)\cdots \rmd\widehat{W}(\zeta_q)\;.
\end{equation}
\end{theorem}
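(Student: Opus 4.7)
The plan is to base the proof on the chaos decomposition~(\ref{eq:Wjk}) and to show that only the Hermite rank term $q=q_0$ survives the normalization; the limit will then be read off from a direct spectral-domain rescaling of the multiple Wiener--It\^o integral associated with $W_{j+m,k}^{(q_0)}$. Combining~(\ref{e:intrepX}) with~(\ref{e:herm-integ}) and the filtering identity~(\ref{EqHjkVSHj}), I would first express each summand of~(\ref{eq:Wjkq}) as $W^{(q)}_{j+m,k}=\widehat I_q(\phi_{j,m,k,q})$ with kernel
\[
\phi_{j,m,k,q}(\lambda_1,\dots,\lambda_q)=\rme^{\rmi\gamma_{j+m}k\Lambda}\,\widehat h^{(K)}_{j+m}(\Lambda)\,\prod_{i=1}^q f^{1/2}(\lambda_i)\,\1_{(-\pi,\pi]^q}(\lambda),
\]
where $\Lambda=\lambda_1+\dots+\lambda_q$. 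Using the self-similarity $\widehat W(A/\gamma_j)\stackrel{d}{=}\gamma_j^{-1/2}\widehat W(A)$ applied jointly to all $(m,k)$ in any fixed finite index set, the normalized vector $(\gamma_j^{-(K+d(q_0))}W^{(q_0)}_{j+m,k})_{m,k}$ can be rewritten, in distribution, as $(\widehat I_{q_0}(\widetilde\phi_{j,m,k}))_{m,k}$ with $\widetilde\phi_{j,m,k}(\zeta)=\gamma_j^{-(K+d(q_0))-q_0/2}\,\phi_{j,m,k,q_0}(\zeta/\gamma_j)$ supported on $(-\gamma_j\pi,\gamma_j\pi]^{q_0}$.

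The core step would then be to show that $\widetilde\phi_{j,m,k}\to (f^*(0))^{q_0/2}\gammalim_m^{1/2}\Psi_{m,k}$ in $L^2(\mathbb{R}^{q_0})$, where $\Psi_{m,k}$ is the integrand appearing in~(\ref{yq}). Pointwise convergence is a direct computation using~(\ref{eq:scale}),~(\ref{EqLimHj}), the expansion $(1-\rme^{-\rmi x})^K\sim(\rmi x)^K$ near $x=0$ and the factorization~(\ref{e:sdf}); the arithmetic of the exponent of $\gamma_j$ produces exactly $K+d(q_0)$ via $-q_0/2+1/2+q_0 d=d(q_0)$. To upgrade to $L^2$ convergence I would use~(\ref{EqMajoHjR}) and the boundedness of $f^*$ to dominate $|\widetilde\phi_{j,m,k}(\zeta)|$, uniformly in $j$, by a function of the form $C\prod_i|\zeta_i|^{-d}\cdot F(\zeta_1+\dots+\zeta_{q_0})$ with $F$ behaving like $|\Sigma|^{M-K}$ near the origin and like $|\Sigma|^{-\alpha}$ at infinity, and apply dominated convergence. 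The isometry~(\ref{e:cov-multiple}) then translates this into joint $L^2(\Omega)$ convergence of the vector $(\widehat I_{q_0}(\widetilde\phi_{j,m,k}))_{m,k}$, yielding the fidi limit for the $q_0$ term, with the constant $c_{q_0}(f^*(0))^{q_0/2}$ obtained after reconciling the factor $c_{q_0}/q_0!$ from~(\ref{eq:Wjk}) with the Hermite-to-multiple-integral relation~(\ref{e:herm-integ}).

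To discard the higher-order chaoses in~(\ref{eq:Wjk}), I would derive from Lemma~\ref{lem:convol_spectraldensity} and the same change of variables $\zeta=\gamma_j\lambda$ in $\operatorname{Var}(W^{(q)}_{j+m,k})=q!\int|\widehat h_{j+m}^{(K)}|^2 f^{\star q}$ the estimates
\[
\operatorname{Var}\bigl(W^{(q)}_{j+m,k}\bigr)\leq\begin{cases}C_q\,\gamma_j^{2(K+d(q))} & \text{if } q_0<q\leq q_c,\\[2pt] C_q\,\gamma_j^{2K+1} & \text{if } q>q_c.\end{cases}
\]
Since $d$ is strictly decreasing in $q$ on $\{1,\dots,q_c\}$ and $d(q_0)>0$, both right-hand sides are $o(\gamma_j^{2(K+d(q_0))})$ for $q>q_0$. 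Tracking the $q$-dependence of $C_q$ carefully and combining with the summability~(\ref{e:summability}) would let me exchange the sum and the limit, so that $\sum_{q>q_0}(c_q/q!)W^{(q)}_{j+m,k}$ contributes negligibly after normalization. Added to the fidi convergence of the $q_0$ term, this yields the theorem.

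The main obstacle I anticipate is the construction of the uniform-in-$j$ $L^2$-integrable majorant for $\widetilde\phi_{j,m,k}$: the $2\pi$-periodicity of $\widehat h_{j+m}$ forces one to work with the periodic representative $\{\cdot\}$ of~(\ref{EqMajoHjR}), so the rescaled domain $(-\gamma_j\pi,\gamma_j\pi]^{q_0}$ has to be split into a low-frequency region where $\widehat h_\infty$ dictates the behavior and a high-frequency region where the polynomial decay of $\widehat h_{j+m}$ must beat the singularities $\prod|\zeta_i|^{-d}$. Verifying square-integrability of the majorant near the hyperplane $\Sigma=0$ (where the $(\rmi\Sigma)^{-K}$ factor blows up and must be compensated by $\widehat h_\infty(\Sigma)=O(|\Sigma|^M)$ with $M\geq K$) and at infinity (where $\alpha>1/2$ is crucial) is the central technical point, presumably occupying the auxiliary lemmas of Sections~\ref{sec:appendix} and~\ref{sec:techlemma} announced in the introduction.
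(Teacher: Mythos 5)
Your overall architecture matches the paper's (chaos expansion, multiple Wiener--It\^o representation of $W^{(q)}_{j,k}$, rescaling $\zeta=\gamma_j\xi$, $L^2$ convergence of the rescaled kernel for $q=q_0$, variance bounds for the other chaoses), but two of your key quantitative claims are wrong as stated, and each one breaks the argument.

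First, your bound $\operatorname{Var}(W^{(q)}_{j+m,k})\le C_q\gamma_j^{2K+1}$ for $q>q_c$ does not suffice, and your assertion that it is $o(\gamma_j^{2(K+d(q_0))})$ is false: since $d(q)=qd+(1-q)/2$ with $d<1/2$, one always has $d(q_0)\le d(1)=d<1/2$, so $\gamma_j^{2K+1}$ strictly dominates $\gamma_j^{2(K+d(q_0))}$. The short-memory chaoses would then swamp the limit. The correct bound is $O(\gamma_j^{2K})$, obtained not from a pointwise sup of $|\widehat h_j^{(K)}|^2$ over $(-\pi,\pi)$ (which is what produces the spurious extra $\gamma_j$) but from integrating: $\int_0^\pi|\lambda|^{-2K}|\widehat h_j(\lambda)|^2\,\rmd\lambda=O(\gamma_j^{2K})$ by~(\ref{EqMajoHj}) after the substitution $u=\gamma_j\lambda$, using $\alpha>1/2$ and $M\ge K$. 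With $O(\gamma_j^{2K})$ the conclusion $W^{(SD)}_{j+m,k}=o_P(\gamma_j^{d(q_0)+K})$ follows from $d(q_0)>0$, which is exactly how the paper argues (together with the uniform control $\sup_{q>q_c}\|f^{(\star q)}\|_\infty\le\|f^{(\star\{q_c+1\})}\|_\infty$, which replaces your vaguer ``tracking of $C_q$'').

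Second, the $j$-uniform majorant $C\prod_i|\zeta_i|^{-d}\,F(\Sigma)$ with $F(\Sigma)\sim|\Sigma|^{-\alpha}$ at infinity does not exist when $q_0\ge2$. On the rescaled domain $(-\gamma_j\pi,\gamma_j\pi]^{q_0}$ the sum $\Sigma=\zeta_1+\cdots+\zeta_{q_0}$ reaches $q_0\gamma_j\pi$, and by $2\pi$-periodicity $\widehat h_{j+m}(\Sigma/\gamma_j)$ is again of full size on slabs of width $O(1)$ around each hyperplane $\Sigma=2\pi\gamma_j s$, $s\neq0$; there the kernel is bounded below by a constant times $\prod_i|\zeta_i|^{-d}$ with no decay in $\Sigma$, so dominated convergence against a single $j$-independent majorant fails. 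The paper resolves this by decomposing the domain into the sets $\Gamma^{(q,s)}$, $s=-[q/2],\dots,[q/2]$ (Lemma~\ref{lem:equal_dis_wjqs}), bounding the $s$-th piece by $g(\zeta;2\pi\gamma_j s)$ and invoking Lemma~\ref{LemInt} to get $J(2\pi\gamma_j s)\to0$ for $s\neq0$; only the $s=0$ piece is treated by dominated convergence and yields the limit~(\ref{yq}). You correctly identified the periodicity of $\widehat h_{j+m}$ as the central difficulty, but your proposed low/high-frequency split and single majorant would not close it; the extra ingredient you are missing is precisely this translation-parameter estimate showing that the non-central periods contribute nothing in $L^2$.
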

This Theorem is proved in Section~\ref{sec:proofth}.\\

\noindent{\bf Interpretation of the limit.}\\

The limit distribution can be interpreted as the wavelet coefficients of a
generalized Hermite process defined below, based on the wavelet family
\begin{equation}\label{e:hinfmk}
\left\{h_{\infty,m,k}(t)=
\gammalim_m^{-1/2}h_\infty(-\gammalim_m^{-1} t +k),\,
m,k\in\mathbb{Z}\right\}\;.
\end{equation}
This wavelet family is the natural one to consider because the Fourier
transform $\widehat{h}_{\infty}(\lambda)$ is the rescaled limit of the original
$\widehat{h}_{j}(\lambda)$ as indicated in~(\ref{EqLimHj}).

A generalized process is indexed not by time but by functions. The generalized
Hermite processes for any order $q$ in $\{1,\dots,q_c\}$ are defined as
follows~:
\begin{definition}\label{def:generalK}
  Let $0<d<1/2$ and let $q$ be a positive integer such that $0<q<1/(1-2d)$ and
  $K\geq0$. Define the set of functions
$$
\mathcal{S}_{q,d}^{(K)}=\left\{\theta\;,
\int_{\mathbb{R}} \left|\widehat{\theta}(\xi)\right|^2\;|\xi|^{q-1-2d
  q-2K}\rmd\xi<\infty \right\} \;,
$$
where $\widehat{\theta}=\mathfrak{F}[\theta]$. The generalized random process
$Z_{q,d}^{(K)}$ is indexed by functions $\theta\in\mathcal{S}_{q,d}^{(K)}$ and
is defined as
\begin{equation}\label{EqHarmDerRos}
Z_{q,d}^{(K)}(\theta)=\int_{\mathbb{R}^{q}}^{\prime\prime}
\frac{\overline{\widehat{\theta}(u_1+\cdots+u_q)}}
{(\rmi(u_1+\cdots+u_q))^K|u_1\cdots u_q|^{d}}
\;\rmd\widehat{W}(u_1)\cdots \rmd\widehat{W}(u_q)\;,
\end{equation}
where $\widehat{\theta}=\mathfrak{F}[\theta]$ as defined in~(\ref{eq:TFdef}).
\end{definition}
Now fix $(m,k)\in\mathbb{Z}^2$ and choose a function
$h_{\infty,m,k}(t),t\in\mathbb{R}$ as in~(\ref{e:hinfmk}), so that
\begin{equation}\label{e:hinfiny}
\mathfrak{F}[h_{\infty,m,k}](\xi)=\mathfrak{F}[\gammalim_m^{-1/2} h_{\infty}(-\gammalim_m^{-1} t+k)](\xi)=
(\gammalim_m)^{1/2}\;\rme^{-\rmi\gammalim_m\xi}\;
\overline{\widehat{h}_\infty(\gammalim_m\xi)}\;.
\end{equation}

\begin{lemma}\label{lem:existenceZqdK}
  The conditions on $d$ and $q$ in Definition~\ref{def:generalK} ensures the
  existence of $Z_{q,d}^{(K)}(\theta)$. In particular,
\[
h_{\infty,m,k}\in \mathcal{S}_{q,d}^{(K)}\mbox{ for all }K\in\{0,\dots,M\}\;,
\]
and hence $Z_{q,d}^{(K)}(h_{\infty,m,k})$ is well-defined.
\end{lemma}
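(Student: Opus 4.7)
The plan is to handle the two claims in turn. For the existence of $Z_{q,d}^{(K)}(\theta)$ with $\theta\in\mathcal{S}_{q,d}^{(K)}$, the question reduces to showing that the integrand
\begin{equation*}
g_\theta(u_1,\ldots,u_q):=\frac{\overline{\widehat\theta(u_1+\cdots+u_q)}}{(\rmi(u_1+\cdots+u_q))^K\,|u_1\cdots u_q|^d}
\end{equation*}
belongs to $\overline{L^2}(\mathbb{R}^q,\mathbb{C})$. The symmetry condition~(\ref{e:antisym}) follows from a direct computation, using $\widehat\theta(-\xi)=\overline{\widehat\theta(\xi)}$ (valid for real-valued $\theta$) together with the identity $(\rmi(-s))^K=\overline{(\rmi s)^K}$. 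For the $L^2$ bound I would introduce $\xi=u_1+\cdots+u_q$ as a new integration variable in place of $u_1$, apply Fubini, and identify the inner integral as the $q$-fold self-convolution $F_q$ of $u\mapsto|u|^{-2d}$:
\begin{equation*}
\|g_\theta\|_{L^2}^2=\int_{\mathbb{R}}\frac{|\widehat\theta(\xi)|^2}{|\xi|^{2K}}\,F_q(\xi)\,d\xi.
\end{equation*}
A scaling argument $u_i\mapsto\xi u_i$ gives $F_q(\xi)=F_q(1)\,|\xi|^{q-1-2dq}$, so that $\|g_\theta\|_{L^2}^2$ equals $F_q(1)$ times the integral appearing in the definition of $\mathcal{S}_{q,d}^{(K)}$, hence is finite as soon as $F_q(1)<\infty$.

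The main obstacle is showing $F_q(1)<\infty$ under the hypothesis $q<1/(1-2d)$. I would proceed by induction on $q$ using the classical Riesz convolution identity $(|\cdot|^{-\beta_1}\ast|\cdot|^{-\beta_2})(\xi)=C(\beta_1,\beta_2)|\xi|^{1-\beta_1-\beta_2}$, valid with a finite constant $C$ whenever $\beta_1,\beta_2\in(0,1)$ and $\beta_1+\beta_2>1$. Applied at the $j$-th step to convolve the previous shape $|\xi|^{(j-1)-1-2d(j-1)}$ of $F_{j-1}$ with $|\cdot|^{-2d}$, the three exponent conditions collapse to the single inequality $j(1-2d)<1$. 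This holds for all $j=2,\ldots,q$ precisely under the assumption $q<1/(1-2d)$. One therefore obtains $F_q(\xi)=C_q|\xi|^{q-1-2dq}$ with $C_q<\infty$, concluding the first part.

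For the second assertion I would use~(\ref{e:hinfiny}) to write $|\widehat{h}_{\infty,m,k}(\xi)|^2=\gammalim_m\,|\widehat h_\infty(\gammalim_m\xi)|^2$; the change of variable $\eta=\gammalim_m\xi$ then reduces checking $h_{\infty,m,k}\in\mathcal{S}_{q,d}^{(K)}$ to the finiteness of $\int_\mathbb{R}|\widehat h_\infty(\eta)|^2\,|\eta|^{q-1-2dq-2K}\,d\eta$. Applying the uniform bound~(\ref{EqMajoHinf}) on $\widehat h_\infty$, it suffices to show
\begin{equation*}
\int_\mathbb{R}\frac{|\eta|^{2M+q-1-2dq-2K}}{(1+|\eta|)^{2(M+\alpha)}}\,d\eta<\infty.
\end{equation*}
Integrability near $\eta=0$ follows from $M\geq K$ combined with $q\geq 1$ and $d<1/2$, which together yield $2(M-K)+q(1-2d)>0$; integrability at infinity follows from the chain $q(1-2d)<1<2\alpha\leq 2K+2\alpha$, in which the crucial inputs are the standing assumption $\alpha>1/2$ and the long-memory condition $q<1/(1-2d)$. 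This establishes that $Z_{q,d}^{(K)}(h_{\infty,m,k})$ is well-defined for every $K\in\{0,\ldots,M\}$.
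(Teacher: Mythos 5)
Your argument is correct and follows essentially the same route as the paper: reduce existence to the $L^2$ finiteness of the kernel, show that the $q$-fold integral equals a finite constant times $\int_{\mathbb{R}}|\widehat\theta(s)|^2|s|^{q-1-2qd-2K}\,\rmd s$, and then verify $h_{\infty,m,k}\in\mathcal{S}_{q,d}^{(K)}$ by rescaling and invoking the decay bound~(\ref{EqMajoHinf}) with the endpoint checks $M\geq K$, $\alpha>1/2$, $q(1-2d)\in(0,1)$. The only cosmetic difference is that you obtain the constant by induction on the Riesz convolution identity $|\cdot|^{-\beta_1}\ast|\cdot|^{-\beta_2}=C\,|\cdot|^{1-\beta_1-\beta_2}$, whereas the paper gets the same product of one-dimensional integrals in one step via its change-of-variables Lemma~\ref{LemChagtVar}; the finiteness conditions you derive ($j(1-2d)<1$ for $j\le q$) coincide with the paper's.
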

This Lemma is proved in Section~\ref{sec:appendix}.

By setting in~(\ref{EqHarmDerRos}), $\theta=h_{\infty,m,k}$, defined in~(\ref{e:hinfiny}), we obtain for all $(m,k)\in\mathbb{Z}^2$,
$$
Y^{(q,K)}_{m,k}= Z_{q,d}^{(K)}(h_{\infty,m,k})\;.
$$
Hence the right-hand side of~(\ref{eq::AsympDistWjk}) are the wavelet
coefficients of the generalized process $Z_{q,d}^{(K)}$ with respect to the wavelet family $ \{h_{\infty,m,k},\,m,k\in\mathbb{Z}\}$.

In the special case $q=1$ (Gaussian case), this result corresponds to that of
Theorem~1(b) and Remark~5 in~\cite{moulines:roueff:taqqu:2007:jtsa}, obtained
in the case where $\gamma_j=2^j$. In this special case, we have
$Z_{1,d}^{(K)}=B_{(d+K)}$, where $B_{(d)}$ is the centered generalized
Gaussian process such that for all
$\theta_1,\theta_2\in\mathcal{S}_{1,d}^{(0)}$,
$$
\mathrm{Cov}(B_{(d)}(\theta_1),B_{(d)}(\theta_2))
=\int_{\mathbb{R}} |\lambda|^{-2d}\widehat{\theta_1}(\lambda)\overline{\widehat{\theta_2}(\lambda)}\;\rmd\lambda\;.
$$

It is interesting to observe that, under additional assumptions on
$\theta$, for $K\geq1$, $Z_{q,d}^{(K)}(\theta)$ can also be defined by
\begin{equation}\label{eq:fubiniZ}
Z_{q,d}^{(K)}(\theta)=\int_{\mathbb{R}}\tilde{Z}_{q,d}^{(K)}(t)\overline{\theta(t)}\;\rmd t\;,
\end{equation}
where $\{\tilde{Z}_{q,d}^{(K)}(t),\,t\in\mathbb{R}\}$ denotes a measurable
continuous time process defined by
\begin{equation}\label{EqHarmRos}
\tilde{Z}_{q,d}^{(K)}(t)=
\int_{\mathbb{R}^{q}}^{"}
\frac{\rme^{\rmi (u_1+\cdots+u_q)\,t}-
\sum_{\ell=0}^{K-1}\frac{(\rmi(u_1+\cdots+u_q)\,t)^{\ell}}{\ell!}}
{(\rmi(u_1+\cdots+u_q))^K|u_1\cdots u_q|^{d}}\;
\rmd\widehat{W}(u_1)\cdots \rmd\widehat{W}(u_q),\,t\in\mathbb{R}\;.
\end{equation}
If, in~(\ref{eq:fubiniZ}) we set $K=1$, we recover the usual Hermite process as defined in \cite{Taq79} which has stationary increments. The process $\tilde{Z}_{q,d}^{(K)}(t)$ can be regarded as the Hermite process $\tilde{Z}_{q,d}^{(1)}(t)$ integrated $K-1$ times. In
the special case where $K=q=1$, we recover the Fractional Brownian Motion
$\{B_H(t)\}_{t\in\mathbb{R}}$ with Hurst index $H=d+1/2\in (1/2,1)$.

In the case $K=0$ we cannot define a random process $Z_{q,d}^{(0)}(t)$ as in
~(\ref{EqHarmRos}). The case $K=0$ would correspond to the derivative of the
Hermite process $\tilde{Z}_{q,d}^{(1)}(t)$ but the Hermite process is not
differentiable and thus the process $\tilde{Z}_{q,d}^{(0)}(t),t\in\mathbb{R}$
is not defined. When $K=0$ one can only consider the generalized process
$\tilde{Z}_{q,d}^{(0)}(\theta)$. Relation~(\ref{EqHarmRos}) can be viewed as
resulting from~(\ref{EqHarmDerRos}) and (\ref{eq:fubiniZ}) by interverting
formally the integral signs.

We now state sufficient conditions on $\theta$ for~(\ref{eq:fubiniZ}) to
hold.
\begin{lemma}\label{lem:fubiniZ}
  Let $q$ be a positive integer such that $0<q<1/(1-2d)$ and $K\geq1$.  Suppose
  that $\theta\in\mathcal{S}_{q,\delta}^{(K)}$ is complex valued with at least
  $K$ vanishing moments, that is,
\begin{equation}
  \label{eq:moment-condZ}
\int_{\mathbb{R}}\theta(t)\,t^\ell\;\rmd t=0\quad\text{for all}\quad\ell=0,1,\dots,K-1\;.
\end{equation}
Suppose moreover that
\begin{equation}
  \label{eq:fubini-cond}
\int_{\mathbb{R}} |\theta(t)|\,|t|^{K+(d-1/2)q}\;\rmd t < \infty \;.
\end{equation}
Then Relation~(\ref{eq:fubiniZ}) holds.
\end{lemma}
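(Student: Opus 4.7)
The strategy is to recognize that $\tilde Z_{q,d}^{(K)}(t)=\widehat I_q(f_t)$ where
\[
f_t(u_1,\dots,u_q):=\frac{\rme^{\rmi s t}-\sum_{\ell=0}^{K-1}(\rmi s t)^\ell/\ell!}{(\rmi s)^K\,|u_1\cdots u_q|^d},\qquad s:=u_1+\cdots+u_q,
\]
and then to interchange the integration against $\overline{\theta(t)}\,\rmd t$ with the $q$-fold It\^o--Wiener integral by means of a stochastic Fubini theorem. Once this interchange is justified,
\[
\int_{\mathbb{R}}\overline{\theta(t)}\,\tilde Z_{q,d}^{(K)}(t)\,\rmd t=\widehat I_q\!\left(\int_{\mathbb{R}}\overline{\theta(t)}\,f_t\,\rmd t\right),
\]
and the vanishing-moment condition (\ref{eq:moment-condZ}) together with $\int\overline{\theta(t)}\,\rme^{\rmi s t}\rmd t=\overline{\widehat\theta(s)}$ collapses the inner integral to exactly the kernel of $Z_{q,d}^{(K)}(\theta)$ in~(\ref{EqHarmDerRos}).

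To justify the stochastic Fubini, I would first establish the scaling identity $\|f_t\|_{L^2(\mathbb{R}^q)}=C_{q,d,K}\,|t|^{K+q(d-1/2)}$ for $t\in\mathbb{R}$, with $C_{q,d,K}:=\|f_1\|_{L^2(\mathbb{R}^q)}$, by the change of variable $v_j=t\,u_j$ (the cases $t>0$ and $t<0$ being symmetric). Finiteness of $C_{q,d,K}$ follows from $|\rme^{\rmi s}-\sum_{\ell=0}^{K-1}(\rmi s)^\ell/\ell!|\leq|s|^K/K!$ near the origin (canceling the $(\rmi s)^K$ denominator), the bound $\leq C(1+|s|)^{K-1}$ at infinity (yielding $(\rmi s)^{-1}$ decay), together with $d<1/2$ and $q<1/(1-2d)$. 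Assumption (\ref{eq:fubini-cond}) then gives
\[
\int_{\mathbb{R}}|\theta(t)|\,\|f_t\|_{L^2(\mathbb{R}^q)}\,\rmd t\leq C_{q,d,K}\int_{\mathbb{R}}|\theta(t)|\,|t|^{K+q(d-1/2)}\,\rmd t<\infty,
\]
which is the standard hypothesis ensuring the Bochner convergence of $F:=\int_{\mathbb{R}}\overline{\theta(t)}\,f_t\,\rmd t$ in $\overline{L^2}(\mathbb{R}^q,\mathbb{C})$ and the identity $\int\overline{\theta(t)}\,\widehat I_q(f_t)\,\rmd t=\widehat I_q(F)$ (see e.g. \cite{nualart:2006}).

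It remains to identify $F$ with the kernel of $Z_{q,d}^{(K)}(\theta)$. For almost every $u$ such that $s\ne 0$ and $u_j\ne 0$, the Taylor bound gives $|\overline{\theta(t)}f_t(u)|\leq|\theta(t)||t|^K/(K!|u_1\cdots u_q|^d)$ on $\{|st|\leq 1\}$ and the bound $C|\theta(t)||t|^{K-1}/(|s||u_1\cdots u_q|^d)$ on $\{|st|>1\}$; splitting accordingly (and invoking (\ref{eq:fubini-cond}) on each piece after bounding $|t|^K$ and $|t|^{K-1}$ against $|t|^{K+q(d-1/2)}$ on the appropriate regions) yields absolute convergence of the pointwise integral, so
\[
F(u)=\frac{1}{(\rmi s)^K|u_1\cdots u_q|^d}\!\left(\int\overline{\theta(t)}\rme^{\rmi s t}\rmd t-\sum_{\ell=0}^{K-1}\frac{(\rmi s)^\ell}{\ell!}\int\overline{\theta(t)}\,t^\ell\,\rmd t\right).
\]
Since $t\in\mathbb{R}$, condition (\ref{eq:moment-condZ}) gives $\int\overline{\theta(t)}\,t^\ell\rmd t=\overline{\int\theta(t)\,t^\ell\rmd t}=0$ for $\ell=0,\dots,K-1$, so $F(u)=\overline{\widehat\theta(s)}/((\rmi s)^K|u_1\cdots u_q|^d)$, which is precisely the integrand in~(\ref{EqHarmDerRos}); this proves~(\ref{eq:fubiniZ}).

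The principal obstacle is the invocation of a stochastic Fubini theorem adapted to multiple It\^o--Wiener integrals with complex Hermitian-symmetric kernels against the spectral measure $\widehat W$; assumption (\ref{eq:fubini-cond}) is tailored exactly to match the scaling exponent $K+q(d-1/2)$ of $\|f_t\|_{L^2}$ appearing in its hypothesis. A secondary technical point is the region-wise splitting in the identification of $F$ pointwise, where one must balance the two competing Taylor bounds against the integrability given by (\ref{eq:fubini-cond}); the fact that $q(1-2d)<1$ under the standing assumptions keeps the exponent arithmetic consistent.
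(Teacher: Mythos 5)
Your proposal is correct and follows essentially the same route as the paper: a stochastic Fubini interchange justified by the bound $\int|\theta(t)|\,\|f_t\|_{L^2(\mathbb{R}^q)}\,\rmd t<\infty$, obtained from the scaling identity $\|f_t\|_{L^2}=C\,|t|^{K+(d-1/2)q}$ via the substitution $v_j=t u_j$, after which the vanishing-moment condition collapses the kernel to that of $Z_{q,d}^{(K)}(\theta)$. If anything, your treatment of the finiteness of the constant (the $|s|^{-1}$ decay of the ratio at infinity) and of the pointwise identification of $F$ is more careful than the paper's brief sketch.
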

This lemma is proved in Section~\ref{sec:appendix}.

If, for example, the $h_j$ are derived from a compactly supported
multiresolution analysis then $h_{\infty}$ will have compact support and so
$h_{\infty,m,k}$ will satisfy~(\ref{eq:fubini-cond}). In this case, the limits
$Y_{m,k}^{(q,K)}$ in Theorem~\ref{thm:AsympDistWjk} can therefore be
interpreted, for $m,k\in\mathbb{Z}$ as the wavelet coefficients of the process
$Z_{q,d}^{(K)}$ belonging to the $q$--th chaos. This interpretation is a useful
one even when the technical assumption~(\ref{eq:fubini-cond}) is not satisfied.\\

\noindent{\bf Self-similarity.}\\

The processes ${Z}_{q,d}^{(K)}$ and $\tilde{Z}_{q,d}^{(K)}$ are
self-similar. Self-similarity can be defined for processes indexed by
$t\in\mathbb{R}$ as well as for generalized processes indexed by functions
$\theta$ belonging to some suitable space $\mathcal{S}$, for example the space
$\mathcal{S}_{q,\delta}^{(K)}$ defined above.

A process $\{Z(t),\,t\in\mathbb{R}\}$ is said to be \emph{self-similar with
parameter $H>0$} if for any $a>0$,
$$
\{a^H \, Z(t/a),\,t\in\mathbb{R}\}
\overset{\text{\tiny{fidi}}}{=}
\{Z(t),\,t\in\mathbb{R}\}\;,
$$ 
where the equality holds in the sense of finite-dimensional distributions.  A
generalized process $\{Z(\theta),\,\theta\in\mathcal{S}\}$ is said to be
\emph{self-similar with parameter $H>0$} if for any $a>0$ and
$\theta\in\mathcal{S}$,
$$
Z(\theta^{a,H})
\overset{\text{\tiny{d}}}{=}
Z(\theta)\;,
$$ 
where $\theta^{a,H}(u)=a^{-H}\theta(u/a)$ (see \cite{major:1984}, Page 5).
Here $\mathcal{S}$ is assumed to contain both $\theta^{a,H}$ and $\theta$.
 
Observe that the process $\{\tilde{Z}_{q,d}^{(K)}(t),\,t\in\mathbb{R}\}$, with
$K\geq1$  is  self-similar with parameter
\begin{equation}
  \label{eq:Hss}
H=K+qd-q/2=(K-1)+(d(q)+1/2)\;.  
\end{equation}
As noted above $\tilde{Z}_{q,d}^{(K)}$ can be regarded as
$\tilde{Z}_{q,d}^{(1)}$ integrated $K-1$ times.

The generalized process
$\{{Z}_{q,d}^{(K)}(\theta),\,\theta\in\mathcal{S}_{q,\delta}^{(K)}\}$, which is
defined in~(\ref{EqHarmDerRos}) with $K\geq0$, is self-similar with the same
value of $H$ as in~(\ref{eq:Hss}), but this time the formula is also valid for
$K=0$. 

In particular, the Hermite process ($K=1$) is self-similar with
$H=d(q)+1/2\in(1/2,1)$ and the generalized process $Z_{q,d}^{(0)}(\theta)$
with $K=0$ is self-similar with $H=d(q)-1/2\in(-1/2,0)$.\\

\noindent{\bf Interpretation of the result.}\\

In view of the preceding discussion, the wavelet
coefficients of the subordinated process $Y$ behave at large scales
($\gamma_j\to\infty$) as those of a self-similar process $Z_{q,d}^{(K)}$ living
in the chaos of order $q_0$ (the Hermite rank of $G$) and with self-similar
parameter $K+d(q_0)-1/2$.

\section{Proof of Theorem~\ref{thm:AsympDistWjk}}\label{sec:proofth}

{\bf Notation.} It will be convenient to use the following notation. We denote by $\Sigma_q,\,q\geq 1$, the $\mathbb{C}^{q}\to\mathbb{C}$
function defined, for all $y=(y_1,\dots,y_{q})$ by
\begin{equation}
    \label{eq:DefPartialSums}
    \Sigma_q(y)=\sum_{i=1}^{q} y_i\;.
  \end{equation}
With this notation $Y_{m,k}^{(q,K)}$ in Theorem~\ref{thm:AsympDistWjk} can be
expressed as 
\[
 Y^{(q,K)}_{m,k}=
  (\gammalim_m)^{1/2}\,
  \int_{\mathbb{R}^{q}}^{''}\frac{\exp\circ\Sigma_q(\rmi k\gammalim_{m}\zeta)}{\left(\Sigma_q(\rmi\zeta)\right)^K}
\cdot \frac{\widehat{h}_{\infty}\circ \Sigma_q(\gammalim_m\zeta)}
{|\zeta_1|^{d}\cdots|\zeta_q|^{d}}
\;\rmd\widehat{W}(\zeta_1)\cdots \rmd\widehat{W}(\zeta_q)\;.
\]
where $\circ$ denotes the composition of functions.\\

We will separate the Wiener chaos expansion~(\ref{eq:Wjk}) of $W_{j,k}$ into two terms
depending on the position of $q$ with respect to $q_c$.
The first term includes only the $q$'s for which $H_q(x)$ exhibits long--range
dependence (LD), that is,
\begin{equation}
  \label{eq:Wjklrd}
W_{j,k}^{(LD)}=\sum_{q=0}^{q_c}  \frac{c_q}{q!} W_{j,k}^{(q)}\;,
\end{equation}
and the second term includes the terms which exhibit short--range dependence (SD)
\begin{equation}
  \label{eq:Wjksd}
W_{j,k}^{(SD)}=\sum_{q=q_c+1}^{\infty} \frac{c_q}{q!} W_{j,k}^{(q)}\;.
\end{equation}

Using Representation (\ref{e:intrepX}) and~(\ref{e:herm-integ}) since $X$ has unit variance, one has
for any $\ell\in\mathbb{Z}$,
\begin{align*}
H_q(X_\ell)&=
H_q\left(\int_{-\pi}^{\pi}\rme^{\rmi\xi \ell}f^{1/2}(\xi)
\rmd\widehat{W}(\xi)\right)\\
&=\int_{(-\pi,\pi]^q}^{''}
\exp\circ\Sigma_q(\rmi\ell\xi)\times
\left(f^{\otimes q}(\xi)\right)^{1/2}
\;\rmd\widehat{W}(\xi_1)\cdots \rmd\widehat{W}(\xi_q)\;.
\end{align*}
Then by~(\ref{eq:Wjkq}),(\ref{EqHjk}) and (\ref{EqHjkVSHj}), we have
\begin{eqnarray*}
W_{j,k}^{(q)}&=&\sum_{\ell\in\mathbb{Z}}h_{j}^{(K)}(\gamma_j k-\ell) H_q(X_{\ell})\\
&=&\sum_{\ell\in\mathbb{Z}}h_{j}^{(K)}(\gamma_j k-\ell)\int_{(-\pi,\pi]^q}^{''}
\exp\circ\Sigma_q(\rmi\ell\xi)\times
\left(f^{\otimes q}(\xi)\right)^{1/2}
\;\rmd\widehat{W}(\xi_1)\cdots \rmd\widehat{W}(\xi_q)\\
&=&\int_{(-\pi,\pi]^q}^{''}\left(\sum_{\ell\in\mathbb{Z}}h_{j}^{(K)}(\gamma_j k-\ell)
\exp\circ\Sigma_q(\rmi\ell\xi)\right)
\left(f^{\otimes q}(\xi)\right)^{1/2}
\;\rmd\widehat{W}(\xi_1)\cdots \rmd\widehat{W}(\xi_q)\\
&=&\int_{(-\pi,\pi]^q}^{''}\rme^{\Sigma_q(\rmi\gamma_j k\xi)}\left(\sum_{m\in\mathbb{Z}}h_{j}^{(K)}(m)
\exp\circ\Sigma_q(-\rmi m\xi)\right)\left(f^{\otimes q}(\xi)\right)^{1/2}
\;\rmd\widehat{W}(\xi_1)\cdots \rmd\widehat{W}(\xi_q)\\
&=& \int_{(-\pi,\pi]^q}^{''}  \rme^{\Sigma_q(\rmi\gamma_j k\xi)}
\left(\widehat{h}_{j}^{(K)}\circ\Sigma_q(\xi)\right)\left(f^{\otimes q}(\xi)\right)^{1/2}
\;\rmd\widehat{W}(\xi_1)\cdots \rmd\widehat{W}(\xi_q)\;.
\end{eqnarray*}
Then
\begin{equation}\label{eq:fjk_representation}
W_{j,k}^{(q)}=\widehat{I}_q(f_{j,k}^{(q)})\;,
\end{equation}
with
\[
f_{j,k}^{(q)}(\xi)=\left(\exp\circ\Sigma_q(\rmi k\gamma_j\xi)\right)\left(
\widehat{h}_{j}^{(K)}\circ\Sigma_q(\xi)\right)
\left(f^{\otimes q}(\xi)\right)^{1/2}
\times\1_{(-\pi,\pi)}^{\otimes q}(\xi)
\;,
\]

\noindent where $\xi=(\xi_1,\cdots,\xi_q)$ and $f^{\otimes q}(\xi)=f(\xi_1)\cdots f(\xi_q)$.\\

The two following results provide the asymptotic behavior of each term of the sum
in~(\ref{eq:Wjklrd}) and of $W_{j,k}^{(SD)}$, respectively. They are proved in
Sections~\ref{sec:asympt-behav-w_jq} and~\ref{sec:short-range-depend},
respectively. The first result concerns the terms with long memory, that is, with $q\leq q_c$. The second result concerns the terms with short memory for which $q>q_c$.

\begin{proposition}\label{PropCvWjk}
  Suppose that $q\in\{1,\dots,q_c\}$. Then, as $j\to\infty$,
\begin{equation}\label{c1}
\left(\gamma_{j}^{-(d(q)+K)}W_{j+m,k}^{(q)},\, m,k\in \mathbb{Z}\right)
\overset{\text{\tiny{fidi}}}{\rightarrow}
\left((f^*(0))^{q/2} \; Y^{(q,K)}_{m,k},\, m,k\in \mathbb{Z}\right)\;,
\end{equation}
where
$
Y^{(q,K)}_{m,k}$ is given by (\ref{yq}).
\end{proposition}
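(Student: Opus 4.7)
The plan is to start from the spectral representation $W_{j+m,k}^{(q)}=\widehat{I}_q(f_{j+m,k}^{(q)})$ recorded in~(\ref{eq:fjk_representation}), rescale the spectral variable by the factor $\gamma_j$, and identify the $L^2(\mathbb{R}^q)$ limit of the resulting integrand. The Wiener isometry~(\ref{e:cov-multiple}) then yields $L^2(\Omega)$ convergence of the stochastic integrals, which is upgraded to joint finite-dimensional convergence by linearity within the $q$-th chaos.

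By the self-similarity of the random measure $\widehat{W}$ (that is, $\gamma_j^{-1/2}\widehat{W}(\gamma_j\cdot)\overset{\text{\tiny{d}}}{=}\widehat{W}(\cdot)$ as a random measure), one gets the joint-in-$(m,k)$ equality in law
\[
\gamma_j^{-(d(q)+K)}W_{j+m,k}^{(q)}\overset{\text{\tiny{d}}}{=}\widehat{I}_q(g_{j,m,k}),\qquad g_{j,m,k}(\zeta):=\gamma_j^{-(d(q)+K+q/2)}\,f_{j+m,k}^{(q)}(\gamma_j^{-1}\zeta)\,\1_{(-\pi\gamma_j,\pi\gamma_j)^q}(\zeta)\;.
\]
Denote by $y_{m,k}^{(q,K)}$ the integrand in~(\ref{yq}), so that $Y_{m,k}^{(q,K)}=\widehat{I}_q(y_{m,k}^{(q,K)})$. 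Inserting the factorization~(\ref{EqHjkVSHj}) into the definition of $f_{j+m,k}^{(q)}$ and combining the elementary limits $\gamma_{j+m}/\gamma_j\to\gammalim_m$ from~(\ref{eq:scale}), $\gamma_{j+m}^{-1/2}\widehat{h}_{j+m}(\gamma_{j+m}^{-1}\mu)\to\widehat{h}_\infty(\mu)$ from~(\ref{EqLimHj}), $(1-\rme^{-\rmi x})^{-K}\sim(\rmi x)^{-K}$ as $x\to 0$, and $f(\eta)\sim f^*(0)|\eta|^{-2d}$ as $\eta\to 0$ from~(\ref{e:sdf}), the algebraic identity $d(q)=qd+(1-q)/2$ makes all powers of $\gamma_j$ cancel and yields the pointwise limit $g_{j,m,k}(\zeta)\to(f^*(0))^{q/2}\,y_{m,k}^{(q,K)}(\zeta)$ for every $\zeta\in(\mathbb{R}\setminus\{0\})^q$ with $\Sigma_q(\zeta)\neq0$.

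The main obstacle is to upgrade this pointwise convergence to $L^2(\mathbb{R}^q)$ convergence by dominated convergence. Using~(\ref{EqMajoHjR}) together with the elementary inequality $|1-\rme^{-\rmi\lambda}|\geq(2/\pi)|\{\lambda\}|$, one obtains for $\widehat{h}_{j+m}^{(K)}(\Sigma_q(\zeta)/\gamma_j)$ a bound which, combined with $f(\zeta_i/\gamma_j)\leq C|\zeta_i/\gamma_j|^{-2d}$ on $(-\pi\gamma_j,\pi\gamma_j)$, again cancels the $\gamma_j$ powers and leaves a $j$-independent dominating function of the schematic form
\[
|g_{j,m,k}(\zeta)|\leq C_m\;\frac{|\Sigma_q(\zeta)|^{M-K}\wedge|\Sigma_q(\zeta)|^{-\alpha-K}}{|\zeta_1|^d\cdots|\zeta_q|^d}\;.
\]
To check that this dominant is square-integrable, I would apply the convolution argument underlying Lemma~\ref{lem:convol_spectraldensity} via the change of variable $u=\Sigma_q(\zeta)$, reducing the question to the finiteness of $\int_{\mathbb{R}}|\widehat{h}_\infty(u)|^2\,|u|^{q-1-2dq-2K}\,\rmd u$. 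By~(\ref{EqMajoHinf}), this is finite under the standing hypotheses $q\leq q_c$ (so that $d(q)>0$), $M\geq K$ and $\alpha>1/2$, which is precisely the integrability condition ensuring well-definedness of $Z_{q,d}^{(K)}(h_{\infty,m,k})$ in Lemma~\ref{lem:existenceZqdK}.

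Once $g_{j,m,k}\to(f^*(0))^{q/2}\,y_{m,k}^{(q,K)}$ in $L^2(\mathbb{R}^q)$, the isometry~(\ref{e:cov-multiple}) delivers $\widehat{I}_q(g_{j,m,k})\to(f^*(0))^{q/2}Y_{m,k}^{(q,K)}$ in $L^2(\Omega)$, and by linearity of $\widehat{I}_q$ the same $L^2$ convergence holds for every finite linear combination $\sum_i a_i \widehat{I}_q(g_{j,m_i,k_i})=\widehat{I}_q(\sum_i a_i g_{j,m_i,k_i})$. Composing with the distributional identity above, this yields the finite-dimensional convergence in distribution claimed in~(\ref{c1}).
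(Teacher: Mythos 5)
Your overall strategy (rescale the spectral variable, identify the pointwise limit of the kernel, pass to $L^2(\mathbb{R}^q)$ by domination, then use the isometry and linearity of $\widehat{I}_q$) is the same as the paper's, and the identification of the pointwise limit and the final isometry step are fine. However, there is a genuine gap in the domination step for $q\geq 2$: the claimed $j$-independent dominant
\[
|g_{j,m,k}(\zeta)|\leq C_m\;\frac{|\Sigma_q(\zeta)|^{M-K}\wedge|\Sigma_q(\zeta)|^{-\alpha-K}}{|\zeta_1|^{d}\cdots|\zeta_q|^{d}}
\]
is false. The function $\widehat{h}_{j+m}$ (and likewise $1-\rme^{-\rmi\lambda}$) is $2\pi$-periodic, so the bound~(\ref{EqMajoHjR}) controls $|\widehat{h}_{j+m}(\Sigma_q(\zeta)/\gamma_j)|$ in terms of $\{\Sigma_q(\zeta)/\gamma_j\}$, not of $\Sigma_q(\zeta)/\gamma_j$ itself. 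Since $\xi\in(-\pi,\pi]^q$ makes $\Sigma_q(\xi)$ range over $(-q\pi,q\pi]$, for $q\geq2$ the integrand has, after rescaling, additional ``aliasing'' peaks located near $\Sigma_q(\zeta)=2\pi s\gamma_j$ for $s=\pm1,\dots,\pm[q/2]$, where it is \emph{not} small and in particular not bounded by any fixed function decaying in $|\Sigma_q(\zeta)|$. Because these peaks move with $j$, $\sup_j|g_{j,m,k}(\zeta)|$ fails to be square integrable, so dominated convergence cannot be applied on the whole domain.

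This is exactly why the paper first proves Lemma~\ref{lem:equal_dis_wjqs}, splitting $W_{j+m,k}^{(q)}$ into the pieces $W_{m,k}^{(j,q,s)}$ supported on the sets $\Gamma^{(q,s)}$: only the $s=0$ piece is handled by dominated convergence (with a dominant of essentially the form you wrote), while each $s\neq0$ piece is killed by a direct $L^2$-norm estimate, namely $\mathbb{E}\bigl[|W_{m,k}^{(j,q,s)}|^2\bigr]\lesssim J(2\pi\gamma_j s)$ with $J(t)\to0$ as $|t|\to\infty$ by Lemma~\ref{LemInt} --- a quantitative, $j$-dependent bound rather than domination by a fixed integrable function. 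Your argument is complete as written only in the case $q=1$, where $\Sigma_q(\xi)\in(-\pi,\pi]$ and no aliasing occurs; to repair it for $q\geq2$ you need to add the decomposition over $s$ and the vanishing of the $s\neq0$ contributions. Your reduction of the square-integrability of the $s=0$ dominant to a one-dimensional integral via the change of variable $u=\Sigma_q(\zeta)$ is in the spirit of Lemmas~\ref{LemChagtVar} and~\ref{LemInt}, but note that the verification of the hypotheses of Lemma~\ref{LemInt} (which uses $q\leq q_c$, i.e.\ $2d>1-1/q$, to control the partial sums $\sum_{i=\ell}^q\beta_i$) is also part of the work and should be made explicit.
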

\begin{proposition}\label{PropBoundWSD}
  We have, for any $k\in\mathbb{Z}$, as $j\to\infty$,
  \begin{equation}
    \label{eq:boundWSD}
    W_{j+m,k}^{(SD)}=O_P(\gamma_j^K)\;.
\end{equation}
\end{proposition}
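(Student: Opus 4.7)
The plan is to control the second moment of $W_{j+m,k}^{(SD)}$ and then conclude by Markov's inequality. Since the $W_{j,k}^{(q)}$ for different $q$ live in orthogonal chaoses, relation~(\ref{e:cov-multiple}) yields
\[
\mathbb{E}\bigl|W_{j,k}^{(SD)}\bigr|^2
=\sum_{q>q_c}\frac{c_q^2}{(q!)^2}\,\mathbb{E}\bigl|W_{j,k}^{(q)}\bigr|^2\;,
\]
so it is enough to obtain a bound of order $\gamma_j^{2K}$ for the right hand side. Using~(\ref{eq:Wjkq}) together with Lemma~\ref{lem:convol_spectraldensity}, $W_{j,k}^{(q)}$ is the output of the filter $h_j^{(K)}$ applied to the stationary sequence $\{H_q(X_\ell)\}$ whose spectral density is $q!\,f^{(\star q)}$; hence
\[
\mathbb{E}\bigl|W_{j,k}^{(q)}\bigr|^2
=q!\int_{-\pi}^{\pi}\bigl|\widehat{h}_j^{(K)}(\lambda)\bigr|^2 f^{(\star q)}(\lambda)\,\rmd\lambda\;.
\]
Plugging this into the preceding display and exchanging the (positive) sum and integral by monotone convergence gives
\[
\mathbb{E}\bigl|W_{j,k}^{(SD)}\bigr|^2
=\int_{-\pi}^{\pi}\bigl|\widehat{h}_j^{(K)}(\lambda)\bigr|^2\,g^{(SD)}(\lambda)\,\rmd\lambda\;,
\qquad g^{(SD)}:=\sum_{q>q_c}\frac{c_q^2}{q!}\,f^{(\star q)}\;.
\]

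The next step is to show that $g^{(SD)}\in L^\infty(-\pi,\pi)$. The Fourier coefficients of $f^{(\star q)}$ are $(r(n)^q)_{n\in\mathbb{Z}}$, hence
$\|f^{(\star q)}\|_\infty\leq (2\pi)^{-1}\sum_{n}|r(n)|^q$. Since $|r(n)|\leq r(0)=1$, one has $|r(n)|^q\leq|r(n)|^{q_c+1}$ whenever $q>q_c$, so
\[
\|g^{(SD)}\|_\infty
\leq \frac{1}{2\pi}\Bigl(\sum_{q>q_c}\frac{c_q^2}{q!}\Bigr)\sum_{n\in\mathbb{Z}}|r(n)|^{q_c+1}\;.
\]
The first factor is finite by~(\ref{e:summability}); the second is finite because $(q_c+1)(1-2d)>1$ implies $|r(n)|^{q_c+1}$ is summable (the long-memory tail behavior $r(n)\sim C n^{2d-1}$ that follows from~(\ref{e:sdf}) can be invoked, or an application of Lemma~\ref{lem:convol_spectraldensity} with $q=q_c+1$).

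It remains to bound $\int_{-\pi}^{\pi}|\widehat{h}_j^{(K)}(\lambda)|^2\,\rmd\lambda$. Writing $|\widehat{h}_j^{(K)}(\lambda)|=|\widehat{h}_j(\lambda)|/|1-\rme^{-\rmi\lambda}|^K$ via~(\ref{EqHjkVSHj}), and using $|1-\rme^{-\rmi\lambda}|\geq(2/\pi)|\lambda|$ on $(-\pi,\pi)$ together with the uniform bound~(\ref{EqMajoHj}), one obtains
\[
|\widehat{h}_j^{(K)}(\lambda)|^2
\leq C\,\gamma_j^{1+2K}\,\frac{|\gamma_j\lambda|^{2(M-K)}}{(1+\gamma_j|\lambda|)^{2M+2\alpha}}\;,
\]
and the change of variable $u=\gamma_j\lambda$ yields $\int_{-\pi}^{\pi}|\widehat{h}_j^{(K)}(\lambda)|^2\rmd\lambda=O(\gamma_j^{2K})$; the resulting integral in $u$ is finite since $M\geq K$ ensures integrability at $0$ and $\alpha>1/2$ ensures integrability at infinity. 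Combining this with $\|g^{(SD)}\|_\infty<\infty$ gives $\mathbb{E}|W_{j,k}^{(SD)}|^2=O(\gamma_j^{2K})$. Replacing $j$ by $j+m$ and using~(\ref{eq:scale}) to absorb the constant $\gammalim_m^{2K}$, Markov's inequality then yields $W_{j+m,k}^{(SD)}=O_P(\gamma_j^K)$.

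The main technical point is the uniform $L^\infty$ bound on the ``short-memory spectral density'' $g^{(SD)}$; once this is in hand, the rest is a direct filter-variance computation. Note that no chaos-by-chaos control is really needed — the bound comes from a single frequency-domain estimate against $g^{(SD)}$.
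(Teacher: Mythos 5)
Your overall strategy is the same as the paper's: bound $\mathbb{E}|W_{j,k}^{(SD)}|^2$ by $\|f^{(SD)}\|_\infty\int_{-\pi}^{\pi}|\widehat{h}_j^{(K)}|^2\,\rmd\lambda$, where $f^{(SD)}=\sum_{q>q_c}(c_q^2/q!)f^{(\star q)}$ is the spectral density of the short-memory part, show $\|f^{(SD)}\|_\infty<\infty$, show the filter energy is $O(\gamma_j^{2K})$ via~(\ref{EqHjkVSHj}), $|1-\rme^{-\rmi\lambda}|\geq 2|\lambda|/\pi$ and~(\ref{EqMajoHj}), and conclude by Chebyshev. Your filter-energy computation and the reduction to a uniform bound on $f^{(SD)}$ are exactly the paper's, and are correct.

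The one step you do differently is the proof that $\|f^{(SD)}\|_\infty<\infty$, and there your argument has a genuine gap. You bound $\|f^{(\star q)}\|_\infty\leq(2\pi)^{-1}\sum_n|r(n)|^q\leq(2\pi)^{-1}\sum_n|r(n)|^{q_c+1}$ and claim the last sum is finite because $(q_c+1)(1-2d)>1$. But by the definition~(\ref{e:qcDef}) one only has $(q_c+1)(1-2d)\geq 1$, with equality precisely when $1/(1-2d)$ is an integer; in that boundary case $|r(n)|^{q_c+1}$ decays like $n^{-1}$ and is not summable, so the Fourier-coefficient route collapses. Moreover, the asymptotic $r(n)\sim Cn^{2d-1}$ does not follow from~(\ref{e:sdf}) alone (boundedness and continuity of $f^*$ at the origin do not control the covariance decay without further regularity of $f^*$), and your fallback --- invoking Lemma~\ref{lem:convol_spectraldensity} with $q=q_c+1$ --- gives boundedness of the \emph{function} $f^{(\star(q_c+1))}$, which does not imply absolute summability of its Fourier coefficients, so it cannot rescue the $\sum_n|r(n)|^{q_c+1}$ estimate as stated. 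The paper avoids all of this by staying on the spectral side: it uses Young's inequality $\|g_1\star g_2\|_\infty\leq\|g_1\|_\infty\|g_2\|_1$ together with $\|f\|_1=r(0)=1$ to get $\sup_{q>q_c}\|f^{(\star q)}\|_\infty\leq\|f^{(\star(q_c+1))}\|_\infty$, and then cites Lemma~\ref{lem:convol_spectraldensity}(ii) for the single exponent $q=q_c+1$. Replacing your covariance-summability paragraph by that two-line convolution argument makes your proof complete and essentially identical to the paper's.
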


It follows from Proposition~\ref{PropCvWjk} that the dominating term
in~(\ref{eq:Wjklrd}) is given by the chaos of order $q=q_0$.
Now, since $d(q_0)>0$ by~(\ref{e:dq>0}), we get from Proposition~\ref{PropBoundWSD} that, for all
$(k,m)$, as $j\to\infty$,
\begin{equation*}
W_{j+m,k}^{(SD)}=o_p(\gamma_{j}^{d(q_0)+K})\;.
\end{equation*}
This concludes the proof of Theorem~\ref{thm:AsympDistWjk}.

\subsection{Proof of Proposition~\ref{PropCvWjk}}
\label{sec:asympt-behav-w_jq}
We first express the distribution of $\{W_{j+m,k}^{(q)},\,m,k\in\mathbb{Z}\}$
as a finite sum of stochastic integrals and then show that each integral
converges in $L^2(\Omega)$.

\begin{lemma}\label{lem:equal_dis_wjqs}
Let $q\in\mathbb{N}^{*}$. For any $j$
\begin{equation}\label{ExpWjk}
W_{j+m,k}^{(q)}\overset{(\text{\tiny{fidi}})}{=}
\sum_{s=-[q/2]}^{[q/2]}W_{m,k}^{(j,q,s)}\;,
\end{equation}
where $[a]$ denotes the integer part of $a$, and for any $q\in\mathbb{N}^{*}$,
$s\in\mathbb{Z}$, 
\begin{equation}\label{ExpWjks}
W_{m,k}^{(j,q,s)}=\int_{\zeta\in\mathbb{R}^q}^{''}
\1_{\Gamma^{(q,s)}}(\gamma_j^{-1}\zeta)
f_{m,k}(\zeta;j,q)\;\rmd\widehat{W}(\zeta_1) \cdots \rmd\widehat{W}(\zeta_q)\;,
\end{equation}
where $f_{m,k}(\zeta;j,q)$ is defined by (setting $\xi=\gamma_j^{-1}\zeta$)
\begin{equation}\label{eq:fjk}
f_{m,k}(\gamma_j\xi;j,q)=\gamma_j^{-q/2}
\frac{\exp\circ\Sigma_q(\rmi\gamma_{j+m}k\xi)\times
\widehat{h}_{j+m}\circ\Sigma_q(\xi)}
{\{1-\exp\circ\Sigma_q(-\rmi\xi)\}^K}
\left(f^{\otimes q}(\xi)\right)^{1/2}.
\end{equation}
and where
\begin{equation}\label{EqGammas}
\Gamma^{(q,s)}=
\left\{\xi\in (-\pi,\pi]^q,\,
-\pi+2s\pi< \sum_{i=1}^q\xi_i \leq \pi+2s\pi\right\}\;.
\end{equation}
\end{lemma}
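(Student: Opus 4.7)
My approach is to start from the spectral representation (\ref{eq:fjk_representation}) (with $j$ replaced by $j+m$), rewrite the kernel using identity (\ref{EqHjkVSHj}) to isolate a factor $(1-\exp\circ\Sigma_q(-\rmi\xi))^{-K}$, partition the integration cube $(-\pi,\pi]^q$ into the pieces $\Gamma^{(q,s)}$ indexed by the ``winding'' $s$ of $\Sigma_q(\xi)$ around $(-\pi,\pi]$, and finally pass from the cube to $\mathbb{R}^q$ via the scaling property of the spectral white noise $\widehat{W}$ under $\zeta=\gamma_j\xi$.

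\textbf{Kernel reshaping and domain splitting.} By (\ref{eq:fjk_representation}) we have $W^{(q)}_{j+m,k}=\widehat{I}_q(f^{(q)}_{j+m,k})$. Plugging (\ref{EqHjkVSHj}) in at $\lambda=\Sigma_q(\xi)$ yields
\[
\widehat{h}_{j+m}^{(K)}\circ\Sigma_q(\xi)=\frac{\widehat{h}_{j+m}\circ\Sigma_q(\xi)}{(1-\exp\circ\Sigma_q(-\rmi\xi))^K},
\]
valid off the Lebesgue-null set where the denominator vanishes. Since $\xi_i\in(-\pi,\pi]$ forces $\Sigma_q(\xi)\in(-q\pi,q\pi]$, the sets $\Gamma^{(q,s)}$ form a Lebesgue-a.e.\ partition of $(-\pi,\pi]^q$ precisely for $s\in\{-[q/2],\dots,[q/2]\}$ (the extreme indices being those for which $(-\pi+2s\pi,\pi+2s\pi]\cap(-q\pi,q\pi]\neq\emptyset$). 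Linearity of $\widehat{I}_q$ then gives
\[
W^{(q)}_{j+m,k}=\sum_{s=-[q/2]}^{[q/2]}\widehat{I}_q\bigl(f^{(q)}_{j+m,k}\,\1_{\Gamma^{(q,s)}}\bigr)\;.
\]

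\textbf{Rescaling and identification.} I then invoke the scaling property of the complex Gaussian measure: for any $a>0$ and any $g\in\overline{L^2}(\mathbb{R}^q,\mathbb{C})$,
\[
\widehat{I}_q(g)\overset{(\text{\tiny{fidi}})}{=}a^{-q/2}\,\widehat{I}_q\bigl(g(\cdot/a)\bigr),
\]
an equality that can be realized jointly over any family of kernels by using a single auxiliary white noise $\widetilde{W}\overset{\text{\tiny{d}}}{=}\widehat{W}$. Applying this with $a=\gamma_j$, jointly in $(m,k,s)$, and checking directly from (\ref{eq:fjk}) together with the substitution of (\ref{EqHjkVSHj}) that
\[
\gamma_j^{-q/2}\,f^{(q)}_{j+m,k}(\zeta/\gamma_j)=f_{m,k}(\zeta;j,q)\,\1_{(-\pi,\pi]^q}(\zeta/\gamma_j),
\]
and using $\Gamma^{(q,s)}\subset(-\pi,\pi]^q$ to absorb the indicator, we recover exactly (\ref{ExpWjks}).

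\textbf{Main difficulty.} The only subtle point is the rescaling step: multiple Wiener--It\^{o} integrals do not admit a pathwise change-of-variable formula, so one must resort to the distributional identity $\widehat{W}(a\,\cdot)\overset{\text{\tiny{d}}}{=}a^{1/2}\widetilde{W}(\cdot)$ with $\widetilde{W}$ an auxiliary white noise, equidistributed to $\widehat{W}$ but not pathwise identical. This is the very reason the lemma is stated as an equality of finite-dimensional distributions rather than an almost-sure identity. The key technical point which makes the joint fidi statement go through is that a single such $\widetilde{W}$ can serve simultaneously for all $(m,k,s)$, thereby preserving the joint law of the family $\{W^{(q)}_{j+m,k}\}_{m,k\in\mathbb{Z}}$ on both sides.
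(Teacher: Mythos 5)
Your proposal is correct and follows essentially the same route as the paper: rewrite the kernel via (\ref{EqHjkVSHj}), split the cube $(-\pi,\pi]^q$ into the sets $\Gamma^{(q,s)}$ with $s\in\{-[q/2],\dots,[q/2]\}$ (the paper performs this split after, rather than before, the rescaling, which is immaterial), and use the distributional self-similarity of the spectral measure under $\zeta=\gamma_j\xi$ (the paper cites Theorem~4.4 of \cite{major:1984} for exactly this step). Your closing remark correctly identifies why the statement is only an equality of finite-dimensional distributions.
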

\begin{proof}
Using~(\ref{eq:fjk_representation}), with $j$ replaced by $j+m$,
and~(\ref{EqHjkVSHj}), we get
$$
W_{j+m,k}^{(q)}=
\int_{(-\pi,\pi]^q}^{''}\exp\circ\Sigma_q(\rmi\gamma_{j+m} k\xi)
\frac{\widehat{h}_{j+m}\circ\Sigma_q(\xi)}
{\{1-\exp\circ\Sigma_q(\rmi\xi\}^K}
\left(f^{\otimes q}(\xi)\right)^{1/2}\;
\rmd\widehat{W}(\xi_1)\cdots \rmd\widehat{W}(\xi_q)\;.
$$
By~(\ref{eq:fjk}), we thus get
\begin{align}
\label{eq:Wjksansscaling}
W_{j+m,k}^{(q)}
&=\displaystyle\int_{\xi\in(-\pi,\pi]^q}^{''}\gamma_j^{q/2}f_{m,k}(\gamma_j\xi;j,q)\;
\rmd\widehat{W}(\xi_1)\cdots \rmd\widehat{W}(\xi_q)\\
\nonumber
&\displaystyle\overset{(\text{\tiny{fidi}})}{=}
\int_{\zeta\in(-\gamma_j\pi,\gamma_j\pi]^q}^{''}
f_{m,k}(\zeta;j,q)\;
\rmd\widehat{W}(\zeta_1)\cdots \rmd\widehat{W}(\zeta_q)\;,
\end{align}
where we set $\zeta=\gamma_j\xi$ (see Theorem~4.4 in \cite{major:1984}).
Observe that for all $\zeta\in(-\gamma_j\pi,\gamma_j\pi]^q$,
$$
-\pi\gamma_j-2[q/2]\pi\gamma_j\leq -q\gamma_j\pi
\leq \sum_{i=1}^q\zeta_i \leq q\gamma_j\pi\leq \pi\gamma_j+2[q/2]\pi\gamma_j\;.
$$
The result follows by using that
for any $\zeta\in(-\gamma_j\pi,\gamma_j\pi]^q$, there is a unique
$s=-[q/2],\dots,[q/2]$ such that
$\zeta/\gamma_j\in\Gamma^{(q,s)}$.
\end{proof}
\medskip
\noindent{\bf Proof of Proposition~\ref{PropCvWjk}.} In view of Lemma~\ref{lem:equal_dis_wjqs}, we shall look at the $L^2(\Omega)$ convergence of the normalized $W_{m,k}^{(j,q,s)}$ at each value of $s$. Proposition~\ref{PropCvWjk} will follow from the following
convergence results, valid for all fixed $m,k\in \mathbb{Z}$ as $j\to\infty$. For $s=0$,
\begin{equation}
  \label{eq:convL2s_fixed}
  \gamma_{j}^{-(d(q)+K)}W_{m,k}^{(j,q,0)} \overset{L^2}{\rightarrow}
  (f^*(0))^{q/2} \; Y^{(q,K)}_{m,k}  \;,
\end{equation}
 whereas for other values of $s$, namely for all $s\in\{-[q/2],\dots,-1,1,\dots,[q/2]\}$,
  \begin{equation}
    \label{eq:convL2s_nz}
  \gamma_{j}^{-(d(q)+K)}W_{m,k}^{(j,q,s)} \overset{L^2}{\rightarrow}0\;,
  \end{equation}
  where $d(q)$ is defined in~(\ref{e:ldparamq}).\\

We now prove these convergence using the representation~(\ref{ExpWjks}). By~(\ref{e:sdf}) and $|1-\rme^{\rmi\lambda}|\geq2|\lambda|/\pi$ on
  $\lambda\in(-\pi,\pi)$, we have that
\begin{equation}\label{e:fbound}
f(\lambda)\leq \left(\frac\pi{2}\right)^{-2d}\, \|f^*\|_\infty
\,|\lambda|^{-2d}\,,\quad \lambda\in[-\pi,\pi]\;.
\end{equation}
By definition of $\Gamma^{(q,s)}$ in~(\ref{EqGammas}), we have, for all
$\zeta\in\gamma_j \Gamma^{(q,s)}$, $\gamma_j^{-1}\sum_i\zeta_i-2\pi s
\in(-\pi,\pi]$. Hence using the $(2\pi)$-periodicity of $\widehat{h}_{j+m}$, we
can use~(\ref{EqMajoHj}) for bounding
$\widehat{h}_{j+m}(\gamma_j^{-1}\sum_i\zeta_i)$. With the change of variables $\zeta=\gamma_j\xi$ and (\ref{e:fbound}), for all $\zeta\in\gamma_j\Gamma^{(q,s)}$ and $j$ large enough so that
$\gamma_{j+m}/\gamma_j\geq \gammalim_m/2$,
\begin{equation}
\label{eq:boundfjk}
\gamma_{j}^{-(d(q)+K)}\left|f_{m,k}(\zeta;j,q)\right|=\gamma_{j}^{-(dq-q/2+1/2+K)}\left|f_{m,k}(\zeta;j,q)\right|\leq
 C_0\,
g(\zeta;2\pi\gamma_j s)\;,
\end{equation}
where $C_0$ is a positive constant and
$$
g(\zeta;t)=
\left(1+\left|\sum_{i=1}^q\zeta_i- t\right|\right)^{-\alpha-K}\, \prod_{i=1}^{q}|\zeta_i|^{-d}\;.
$$
The squared $L^2$-norm of $g(\cdot;t)$ reads
$$
J(t)=\int_{\mathbb{R}^d} g^2(\zeta;t)\,\rmd\zeta
=\int_{\mathbb{R}^{q}}\left(1+\left|\sum_{i=1}^q\zeta_i-t\right|\right)^{-2\alpha-2K}\prod_{i=1}^{q}|\zeta_i|^{-2d}\, \prod_{i=1}^q\rmd\zeta_i\;.
$$
We now show that Lemma~\ref{LemInt} applies with $M_1=2\alpha+2K$, $M_2=0$ and $\beta_i=2d$ for $i=1,\dots,q$. Indeed, we have
$M_2-M_1=-2\alpha-2K\leq -2\alpha<-1$. Further, for all $\ell=1,\dots,q-1$, we have, by the assumption on $d$,
$$
\sum_{i=\ell}^q\beta_i=2d(1+q-\ell)>(1+q-\ell)(1-1/q) =q-\ell+(\ell-1)/q\geq q-\ell\,.
$$
Finally, since $\alpha>1/2$,  one has
$M_2-M_1+q=-2\alpha-2K+q<q-1\leq\sum_i \beta_i$.  \\

Applying Lemma \ref{LemInt}, we
get $J(t)\to0$ as $|t|\to\infty$ and $J(0)<\infty$.  Thus, if $s\neq0$, one has $t=2\pi\gamma_j s\to\infty$ as $j\to\infty$ and hence we
obtain~(\ref{eq:convL2s_nz}).  If $s=0$, then $t=2\pi\gamma_j s=0$ and using the bound~(\ref{eq:boundfjk}),
$J(0)<\infty$, and the dominated convergence theorem, we have that the convergence~(\ref{eq:convL2s_fixed}) follows from the
convergence at a.e. $\zeta\in\mathbb{R}^q$ of the left hand side of~(\ref{eq:boundfjk}), which we now
establish. Recall that $f_{m,k}$ is defined in~(\ref{eq:fjk}). By~(\ref{EqLimHj}),~(\ref{e:sdf}) and the continuity of $f^*$
at the origin, we have, as $j\to\infty$,
\begin{eqnarray*}
\gamma_{j}^{-{1}/{2}}
\widehat{h}_{j+m}\circ\Sigma_q\left(\zeta/\gamma_{j}\right)
&=&\left( \frac{\gamma _{j+m}}{\gamma _{j}}\right)^{1/2}\gamma_{j+m}^{-{1}/{2}}
\; \widehat{h}_{j+m}\circ\Sigma_q\left((\zeta/\gamma _{j+m})
(\gamma _{j+m}/\gamma _{j})\right)\\
&\to &\bar{\gamma }_{m}^{{1}/{2}}\;
\widehat{h}_{\infty}(\gammalim_{m}(\zeta_1+\cdots+\zeta_q))\;,
\end{eqnarray*}
and for every $\ell=1,\cdots,q$
\begin{equation*}
\gamma_{j} ^{-2d}f({\zeta_{l}}/{\gamma _{j}})=
\gamma_{j} ^{-2d} \left| 1-\rme^{-\rmi {\zeta_{l}}/{\gamma _{j}}}\right| ^{-2d}f^{\ast}({\zeta_{l}}/{\gamma _{j}})\to f^{\ast} (0) \vert \zeta _{l}\vert ^{-2d}\;.
\end{equation*}
Hence
$\gamma_{j}^{-(d(q)+K)}
f_{m,k}(\zeta;j,q,0)\1_{\Gamma^{(q,s)}}(\gamma_j^{-1}\zeta)$
converges to
$$
(\gammalim_m)^{1/2}(f^*(0))^{q/2}\;
\frac{\rme^{\rmi k\gammalim_{m}(\zeta_1+\cdots+\zeta_q)}\times
\widehat{h}_{\infty}(\gammalim_m(\zeta_1+\cdots+\zeta_q))}
{(\rmi(\zeta_1+\cdots+\zeta_q))^K|\zeta_1|^{d}\cdots |\zeta_q|^{d}}\;.
$$
This concludes the proof.
\null\hfill $\Box$\par\medskip

\subsection{Proof of Proposition~\ref{PropBoundWSD}}
\label{sec:short-range-depend}
We now consider the short-range dependence part of the wavelet coefficients
$(W_{j,k})$ defined by~(\ref{eq:Wjkq}) and~(\ref{eq:Wjksd}).
These wavelet coefficients can be equivalently defined as
\begin{equation}
  \label{eq:WjkSDbis}
  W_{j,k}^{(SD)}=\sum_{\ell\in\mathbb{Z}}h_j^{(K)}(\gamma_j k-\ell)\Delta^K Y_\ell^{(SD)}\;,
\end{equation}
where we have set
$$
\Delta^K Y_{\ell}^{(SD)}=\sum_{q\geq q_{c}+1} \frac{c_{q}}{q!}H_{q}(X_{\ell}),\quad \ell\in\mathbb{Z}\;.
$$
Using Lemma~\ref{lem:convol_spectraldensity}, since~(\ref{e:summability}) holds
and $\{H_q(X_\ell)\}_{\ell\in\mathbb{Z}}$ are uncorrelated weakly stationary
processes, the process $\{\Delta^K Y_{\ell}^{(SD)}\}_{\ell\in\mathbb{Z}}$ is
weakly stationary with spectral density
$$
f^{(SD)}(\lambda)=\sum_{q\geq q_{c}+1} \frac{c_{q}^{2}}{q!} 
f^{(\star q)}(\lambda),\quad \lambda\in(-\pi,\pi)\;.
$$
By Lemma~\ref{lem:convol_spectraldensity}(ii), we have that 
$\|f^{(\star \{q_c+1\})}\|_\infty<\infty$. Using that 
$\|g_1\star g_2\|_\infty\leq\|g_1\|_\infty\|g_2\|_1$ and $\|f\|_1=1$ by
assumption, an induction yields
$$
\sup_{q>q_c}\|f^{(\star q)}\|_\infty \leq \|f^{(\star \{q_c+1\})}\|_\infty\;. 
$$
Hence, by~(\ref{e:summability}), we get $\|f^{(SD)}\|_\infty<\infty$. 
It follows that, for
$W_{j,k}^{(SD)}$ defined in~(\ref{eq:WjkSDbis}), there is a positive constant
$C$ such that,
$$
\mathbb{E}[W_{j,k}^{(SD)2}] \leq 
\|f^{(SD)}\|_\infty
\int_{-\pi}^\pi|\widehat{h}^{(K)}_j(\lambda)|^2\rmd\lambda\leq C\int_0^\pi
|\lambda|^{-2K}\;|\widehat{h}_j(\lambda)|^2\rmd\lambda
=O(\gamma_j^{2K})\;,
$$
where we used~(\ref{EqMajoHj}) with $M\geq K$ and $\alpha>1/2$.
This last relation implies~(\ref{eq:boundWSD}) and concludes the proof of Proposition~\ref{PropBoundWSD}.
\null\hfill $\Box$\par\medskip

\section{Proof of Lemmas~\ref{lem:existenceZqdK} and~\ref{lem:fubiniZ}}\label{sec:appendix}
\subsection{Proof of Lemma~\ref{lem:existenceZqdK}}
Let us first prove that if $\theta\in \mathcal{S}_{q,d}^{(K)}$ then $Z_{q,d}^{(K)}(\theta)$ exists. Indeed, by Definition~\ref{def:generalK}, $Z_{q,d}^{(K)}(\theta)$ exists if
\begin{equation}\label{e:exZqdK}
\int_{\mathbb{R}^{q}}\frac{|\widehat{\theta}(u_1+\cdots+u_q)|^2}{|u_1+\cdots+u_q|^{2K}|u_1\cdots u_q|^{2d}}\rmd u_1\cdots\rmd u_q<\infty\;.
\end{equation}
Use now Lemma~\ref{LemChagtVar} with $\beta_1=\dots=\beta_q=-2d$ and $f(x)=|\widehat{\theta}(x)|^2/|x|^{2K}$
and deduce that Condition~(\ref{e:exZqdK}) is equivalent to
\begin{equation}\label{e:square}
\Gamma\int_{\mathbb{R}}|\widehat{\theta}(s)|^2|s|^{q-1-2qd-2K}\rmd s<\infty\;,
\end{equation}
where
\[
\Gamma=\prod_{i=2}^{q}\left(\int_{\mathbb{R}}|t|^{q-i-2d(q-i+1)}|1-t|^{-2d}\rmd t\right)\;.
\]
Note that the conditions $0<d<1/2$ and $0<q<1/(1-2d)$ ensure that $\Gamma$ is finite. Further, Relation~(\ref{e:square}) implies $\theta\in \mathcal{S}_{q,d}^{(K)}$.

We now prove that for any $m,k$, $h_{\infty,m,k}\in \mathcal{S}_{q,d}^{(K)}$
when $K\in\{0,\dots,M\}$. By Definition~(\ref{e:hinfiny}) of $h_{\infty,m,k}$
\[
\widehat{h}_{\infty,m,k}(\xi)=(\gammalim_m)^{1/2}\;\rme^{-\rmi\gammalim_m\xi}\;
\overline{\widehat{h}_\infty(\gammalim_m\xi)}\;.
\]
Hence
\[
\int_{\mathbb{R}}|\widehat{h}_{\infty,m,k}(s)|^2|s|^{q-1-2qd-2K}\rmd s=\gammalim_m\int_{\mathbb{R}}
|\widehat{h}_\infty(\gammalim_m s)|^2 |s|^{q-1-2qd-2K}\rmd s\;.
\]
Set $v=\gammalim_m s$ and deduce that $h_{\infty,m,k}\in \mathcal{S}_{q,d}^{(K)}$ is equivalent to
\[
\gammalim_m^{2-(q-1-2qd-2K)}\int_{\mathbb{R}}
|\widehat{h}_\infty(v)|^2 |v|^{q-1-2qd-2K}\rmd v<\infty\;.
\]
Assumption~(\ref{EqMajoHinf}) implies that
\[
\int_{\mathbb{R}}|\widehat{h}_\infty(v)|^2 |v|^{q-1-2qd-2K}\rmd v\leq \int_{\mathbb{R}}\frac{|v|^{2M}}{(1+|v|)^{2M+2\alpha}}|v|^{q-1-2qd-2K}\rmd v\;.
\]
Since $M\geq K$ and $q(1-2d)\in (0,1)$ then
$2M+q-1-2qd-2K=(2M-2K)+q(1-2d)-1>-1$. Further $\alpha>1/2$ and $q(1-2d)\in
(0,1)$ imply that
$2M-2M-2\alpha+(q-1-2qd-2K)=-2\alpha-2K+q(1-2d)-1<-1$. Then
\[
\int_{\mathbb{R}}|\widehat{h}_{\infty,m,k}(s)|^2|s|^{q-1-2qd-2K}\rmd s<\infty\;.
\]
holds and $h_{\infty,m,k}\in \mathcal{S}_{q,d}^{(K)}$.

\subsection{Proof of Lemma~\ref{lem:fubiniZ}}

Let $a_t(u_1,\cdots,u_q)$ denote the kernel of the integral in~(\ref{EqHarmRos}) defining $\tilde{Z}_{q,d}^{(K)}$ and suppose we can exchange the order of integration and write
\begin{equation}\label{e:reverse}
\int_{\mathbb{R}}\tilde{Z}_{q,d}^{(K)}(t)\theta(t)\rmd t=\int_{\mathbb{R}^q}^{''}\left[\int_{\mathbb{R}}a_t(u_1,\cdots,u_q)\theta(t)\rmd t\right]\rmd\widehat{W}(u_1)\cdots\rmd\widehat{W}(u_q)\;.
\end{equation}
Then condition~(\ref{eq:moment-condZ}) gives
$$
\int_{\mathbb{R}}
\left[\rme^{\rmi t\,(u_1+\cdots+u_q)}-
\sum_{\ell=0}^{K-1}\frac{(\rmi t(u_1+\cdots+u_q))^{\ell}}{\ell!}\right]
\,\overline{\theta(t)}\,\rmd t =
\int_{\mathbb{R}}\rme^{\rmi t\,(u_1+\cdots+u_q)}\,\overline{\theta(t)}\,\rmd t=\overline{\widehat{\theta}\circ\Sigma_q(u)}\;,
$$
showing that~(\ref{e:reverse}) equals $\tilde{Z}_{q,d}^{(K)}(\theta)$ defined in~(\ref{EqHarmDerRos}). It remains to justify the change of order of integration in~(\ref{e:reverse}) by using a stochastic Fubini theorem, (see for instance
\cite[Theorem~2.1]{pipiras:taqqu2010}). A sufficient condition is
\[
\int_{\mathbb{R}}\left(a_t^2(u_1,\cdots,u_q)\rmd u_1\cdots\rmd u_q\right)^{1/2}\rmd t<\infty\;.
\]
This condition is satisfied, because setting $v=tu$, we have
\[
\int_{\mathbb{R}^q}
\left|\rme^{\rmi t\,(u_1+\cdots+u_q)}-
\sum_{\ell=0}^{K-1}\frac{(\rmi t (u_1+\cdots+u_q))^{\ell}}{\ell!}\right|^2|\rmi(u_1+\cdots+u_q)|^{-2K}|u_1\cdots u_q|^{-2d}\;\rmd^q u\;,
\]
\[
\leq |t|^{2K+2d-q}
\int_{\mathbb{R}^q}(1+|u_1+\cdots+u_q|)^{-2K}|u_1\cdots u_q|^{-2d}\rmd^q u\;.
\]
\section{Auxiliary lemmas}\label{sec:techlemma}
The following lemma provides a bound for the convolution of two functions
exploding at the origin and decaying polynomially at infinity.
\begin{lemma}\label{lem:convolR}
Let $\alpha>1$ and $\beta_1,\beta_2\in[0,1)$ such that $\beta_1+\beta_2<1$,
and set
$$
g_i(t)=|t|^{-\beta_i} (1+|t|)^{\beta_i-\alpha}\;.
$$
Then
\begin{equation}\label{eq:convolR}
\sup_{u\in\mathbb{R}}
\left((1+|u|)^{\alpha}\;
\int_{\mathbb{R}}g_1(u-t)g_2(t)\,\rmd t\right)
<\infty\;.
  \end{equation}
\end{lemma}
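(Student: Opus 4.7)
\textbf{Proof plan for Lemma~\ref{lem:convolR}.} The plan is to show that each $g_i$ satisfies the pointwise bound
$$g_i(t)\le C\,\min\bigl(|t|^{-\beta_i},|t|^{-\alpha}\bigr),$$
so in particular $g_i\in L^1(\mathbb{R})$ (since $\beta_i<1<\alpha$), and then to split the argument into the regimes $|u|\le 2$ and $|u|\ge 2$.

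\textbf{Regime $|u|\ge 2$.} I split $\mathbb{R}=A\cup B$ with $A=\{|t|\le|u|/2\}$ and $B=\{|t|\ge|u|/2\}$. On $A$, one has $|u-t|\ge|u|/2\ge1$, hence $g_1(u-t)\le C|u-t|^{\beta_1-\alpha}|u-t|^{-\beta_1}=C|u-t|^{-\alpha}\le C'|u|^{-\alpha}$, so
$$\int_A g_1(u-t)g_2(t)\,\rmd t\le C'|u|^{-\alpha}\|g_2\|_1.$$
On $B$, one has $|t|\ge|u|/2\ge1$, hence $g_2(t)\le C|t|^{-\alpha}\le C'|u|^{-\alpha}$, and
$$\int_B g_1(u-t)g_2(t)\,\rmd t\le C'|u|^{-\alpha}\|g_1\|_1.$$
This gives $(1+|u|)^\alpha\int g_1(u-t)g_2(t)\,\rmd t\le C$ uniformly on $|u|\ge 2$.

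\textbf{Regime $|u|\le 2$.} Here $(1+|u|)^\alpha$ is bounded, so I only need an absolute bound on the convolution. Split $\mathbb{R}=\{|t|\le 4\}\cup\{|t|>4\}$. On $\{|t|>4\}$ the factor $(1+|t|)^{\beta_2-\alpha}$ gives $g_2(t)\le C|t|^{-\alpha}$, while $|u-t|\ge|t|-|u|\ge|t|/2\ge 1$, so $g_1(u-t)\le C|u-t|^{-\alpha}$, and the integral there is bounded by $\int|u-t|^{-\alpha}|t|^{-\alpha}\,\rmd t<\infty$ uniformly in $u$. On $\{|t|\le 4\}$ all the $(1+\cdot)^{\beta_i-\alpha}$ factors are bounded, and the remaining quantity to control is
$$\int_{|t|\le 4}|u-t|^{-\beta_1}|t|^{-\beta_2}\,\rmd t.$$
This is where the hypothesis $\beta_1+\beta_2<1$ is essential: a scaling argument (the change of variables $t=us$ when $u\ne 0$, together with the convergence of $\int|s|^{-\beta_2}|1-s|^{-\beta_1}\,\rmd s$ on any bounded set, which follows from $\beta_1,\beta_2<1$ and $\beta_1+\beta_2<1$) yields a bound of order $C(1+|u|^{1-\beta_1-\beta_2})\le C'$ for $|u|\le 2$.

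\textbf{Main obstacle.} The delicate point is the bound on $\int_{|t|\le 4}|u-t|^{-\beta_1}|t|^{-\beta_2}\,\rmd t$ when $u$ is small, because the two singularities at $t=0$ and $t=u$ coalesce. The hypothesis $\beta_1+\beta_2<1$ is exactly what guarantees that the resulting (integrable) singularity is benign; the rest of the proof reduces to standard tail estimates based on the decomposition $|t|\gtrless|u|/2$ and to using that $g_i\in L^1(\mathbb{R})$.
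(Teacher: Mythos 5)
Your argument is correct, and for large $|u|$ it coincides with the paper's: both proofs split the integral at $|t|\asymp|u|/2$ (the paper splits on $|t-u|\gtrless|u|/2$, you on $|t|\gtrless|u|/2$, which is the same idea), bound the far factor pointwise by $C|u|^{-\alpha}$, and integrate the remaining factor using $g_i\in L^1(\mathbb{R})$. Where you genuinely diverge is in establishing the uniform bound near the coalescing singularities: the paper does this in one stroke for all $u$ via H\"older's inequality, choosing $p$ with $\beta_1<1/p<1-\beta_2$ (which is where $\beta_1+\beta_2<1$ enters for them) and bounding $J(u)\le\|g_1\|_p\|g_2\|_q$, whereas you localize to $|u|\le2$, $|t|\le4$ and control $\int_{|t|\le4}|u-t|^{-\beta_1}|t|^{-\beta_2}\,\rmd t$ by the scaling $t=us$, which produces the explicit factor $|u|^{1-\beta_1-\beta_2}$ against a truncated Beta-type integral growing like $|u|^{-(1-\beta_1-\beta_2)}$ --- so the same hypothesis $\beta_1+\beta_2<1$ enters, but through the exponent count rather than through the existence of an admissible H\"older pair. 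The H\"older route is shorter and avoids the two-regime case split in $u$; your route is more elementary and makes visible the exact order $|u|^{1-\beta_1-\beta_2}$ of the convolution near $u=0$, which is sharper information than mere boundedness. Both are complete proofs; your scaling step should just be stated with the truncation made explicit, i.e.\ $\int_{|s|\le 4/|u|}|1-s|^{-\beta_1}|s|^{-\beta_2}\,\rmd s=O\bigl(1+(4/|u|)^{1-\beta_1-\beta_2}\bigr)$, since the untruncated integral diverges at infinity precisely because $\beta_1+\beta_2<1$.
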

\begin{proof}
We first show that
$$
J(u)=\int_{\mathbb{R}}g_1(u-t)g_2(t)\,\rmd t
=\int_{\mathbb{R}}|u-t|^{-\beta_1}(1+|u-t|)^{\beta_1-\alpha}
|t|^{-\beta_2}(1+|t|)^{\beta_2-\alpha}\,\rmd t
$$
is uniformly bounded on $\mathbb{R}$. Using the assumptions on
$\beta_1,\beta_2$, there exist $p>1$ such that
$\beta_1<1/p<1-\beta_2$. Let $q$ be such that $1/p+1/q=1$.
The H\"{o}lder inequality implies that
$$
J(u)^{pq}\leq
\int_{\mathbb{R}}|t|^{-p\beta_1}(1+|t|)^{p\beta_1-p\alpha}\,\rmd t\times
\int_{\mathbb{R}}|t|^{-q\beta_2}(1+|t|)^{q\beta_2-q\alpha}\,\rmd t\;.
$$
The condition on $\alpha,\beta_1,\beta_2,p$ and the definition of $q$ imply
that these two integrals are finite. Hence $\sup_u J(u)<\infty$.

We now determine  how fast $J(u)$ tends to 0 as $u\to\infty$.
Observe that, if
$|t-u|\leq|u|/2$, then $|t|\geq|u|/2$.  By splitting the integral in two
integrals on the domains $|t-u|\leq|u|/2$ and ${|t-u|>|u|/2}$, we get $J(u)\leq
J_1(u)+J_2(u)$ with
$$
J_1(u)\leq
(|u|/2)^{-\beta_2}(1+|u|/2)^{\beta_2-\alpha}
\int_{\mathbb{R}}|u-t|^{-\beta_1}(1+|t-u|)^{\beta_1-\alpha}\rmd t\;,
$$
and
$$
J_2(u)\leq
(|u|/2)^{-\beta_1}(1+|u|/2)^{\beta_1-\alpha}
\int_{\mathbb{R}}|t|^{-\beta_2}(1+|t|)^{\beta_2-\alpha}\rmd t\;.
$$
Now, as $|u|\to\infty$, we have $J_i(u)=O(|u|^{-\alpha})$ for $i=1,2$, which
achieves the proof.
\end{proof}

The next lemma describes the convolutions of two periodic functions that
explode at the origin as a power. A different definition of
convolution is involved here (see~(\ref{eq:defconv})).

\begin{lemma}\label{lem:convol_powerlaw}
  Let $(\beta_1,\beta_2)\in(0,1)^2$. Let $g_1$, $g_2$ be ($2\pi$)-periodic
  functions such that $g_i(\lambda)= |\lambda|^{-\beta_i}\;g_i^\ast(\lambda)$,
  $i=1,2$. Each $g_i^\ast(\lambda)$ is a $(2\pi)$-periodic non-negative
  function, bounded on $(-\pi,\pi)$ and positive at the origin, where it is
  also continuous. Let $g=g_1\star g_2$ as defined
  in~(\ref{eq:defconv}). Then,
\begin{itemize}
\item If $\beta_1+\beta_2<1$, $g$ is bounded and continuous on $(-\pi,\pi)$, and
  satisfies $g(0)>0$.
\item If $\beta_1+\beta_2>1$,
  \[
  g(\lambda)=|\lambda|^{-(\beta_1+\beta_2-1)}g^*(\lambda)\;,
  \]
  where $g^*(\lambda)$ is bounded on
  $(-\pi,\pi)$ and converges to a positive constant as $\lambda\to 0$.
  If moreover for some $\beta\in (0,2]$ such that $\beta<\beta_1+\beta_2-1$ and some $L>0$, one has for any $i\in\{1,2\}$
  \begin{equation}
    \label{eq:HbetaL}
  |g_i^*(\lambda)-g_i^*(0)|\leq L|\lambda|^\beta,\,\forall \lambda\in (-\pi,\pi)\;,
  \end{equation}
  then there exists some $L'>0$ depending only on
$L,\beta_1,\beta_2$ such that
  \[
  |g^*(\lambda)-g^*(0)|\leq L'|\lambda|^\beta,\,\forall \lambda\in (-\pi,\pi)\;.
  \]
\end{itemize}
\end{lemma}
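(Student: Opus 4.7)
The plan is to treat the two regimes separately, both using the change of variables $u=\lambda t$ to disentangle the two singularities (at $u=0$ and $u=\lambda$) of the integrand. In the sub-critical case $\beta_1+\beta_2<1$, since $g_i^{\ast}$ is bounded on $(-\pi,\pi)$ and $2\pi$-periodic, $g_1(u)g_2(\lambda-u)$ is dominated by a constant multiple of $|u|^{-\beta_1}|\lambda-u|^{-\beta_2}$, and a standard Beta-type estimate, in the spirit of Lemma~\ref{lem:convolR}, gives $\sup_{\lambda\in(-\pi,\pi)}\int_{-\pi}^{\pi}|u|^{-\beta_1}|\lambda-u|^{-\beta_2}\,\rmd u<\infty$ precisely when $\beta_1+\beta_2<1$. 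Continuity of $g$ on $(-\pi,\pi)$ then follows by dominated convergence, while positivity at $\lambda=0$ is immediate since the integrand there is $|u|^{-\beta_1-\beta_2}g_1^{\ast}(u)g_2^{\ast}(-u)$, strictly positive near the origin by continuity and positivity of $g_i^{\ast}$.

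In the super-critical case $\beta_1+\beta_2>1$, fix $\lambda\in(0,\pi)$ and substitute $u=\lambda t$ in $g(\lambda)=\int_{-\pi}^{\pi}g_1(u)g_2(\lambda-u)\,\rmd u$ to obtain
\[
g(\lambda)=\lambda^{1-\beta_1-\beta_2}\int_{1-\pi/\lambda}^{\pi/\lambda}|t|^{-\beta_1}|1-t|^{-\beta_2}g_1^{\ast}(\lambda t)g_2^{\ast}(\lambda(1-t))\,\rmd t+B(\lambda),
\]
where $B(\lambda)$ collects the contribution from $t\in(-\pi/\lambda,1-\pi/\lambda)$, the range on which $\lambda(1-t)\in(\pi,\lambda+\pi)$ requires a periodic reduction. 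Since $|\lambda(1-t)-2\pi|\geq\pi-\lambda$ on this range, $g_2$ is bounded there and a direct computation yields $B(\lambda)=O(\lambda)$. In the main integral the integrand converges pointwise as $\lambda\downarrow 0$ to $g_1^{\ast}(0)g_2^{\ast}(0)|t|^{-\beta_1}|1-t|^{-\beta_2}$ and is dominated by $\|g_1^{\ast}\|_{\infty}\|g_2^{\ast}\|_{\infty}|t|^{-\beta_1}|1-t|^{-\beta_2}$, whose integral on $\mathbb{R}$ converges precisely because $\beta_1,\beta_2<1$ and $\beta_1+\beta_2>1$. Dominated convergence therefore gives $\lambda^{\beta_1+\beta_2-1}g(\lambda)\longrightarrow g_1^{\ast}(0)g_2^{\ast}(0)\int_{\mathbb{R}}|t|^{-\beta_1}|1-t|^{-\beta_2}\,\rmd t>0$, the case $\lambda\uparrow 0$ being analogous via the same substitution. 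Boundedness of $g^{\ast}$ on $(-\pi,\pi)$ is the same estimate made uniform in $\lambda$. For the H\"older part, the same substitution and the pointwise bound $|g_1^{\ast}(\lambda t)g_2^{\ast}(\lambda(1-t))-g_1^{\ast}(0)g_2^{\ast}(0)|\leq C|\lambda|^{\beta}(|t|^{\beta}+|1-t|^{\beta})$ from~(\ref{eq:HbetaL}) reduce matters to the integrals $\int|t|^{-\beta_1+\beta}|1-t|^{-\beta_2}\,\rmd t$ and $\int|t|^{-\beta_1}|1-t|^{-\beta_2+\beta}\,\rmd t$, which converge at infinity precisely because $\beta<\beta_1+\beta_2-1$; the boundary corrections (tails outside $(1-\pi/\lambda,\pi/\lambda)$ and the rescaled $\lambda^{\beta_1+\beta_2-1}B(\lambda)$) are both of order $\lambda^{\beta_1+\beta_2-1}=o(\lambda^{\beta})$ by the same strict inequality.

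The main obstacle is the careful bookkeeping of the periodic reduction of $g_2(\lambda(1-t))$ in the substitution: one must isolate the boundary region where $\lambda(1-t)\notin(-\pi,\pi)$ and verify that its contribution $B(\lambda)$ is of order $\lambda$, hence negligible after rescaling by $\lambda^{\beta_1+\beta_2-1}$. Once this is done, all three assertions reduce to routine dominated-convergence estimates, with the strict inequalities $\beta_1+\beta_2<1$, $\beta_1+\beta_2>1$, and $\beta<\beta_1+\beta_2-1$ entering only as the conditions for integrability at infinity in the rescaled integral on $\mathbb{R}$.
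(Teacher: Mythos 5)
There is one genuine gap, in the sub-critical case $\beta_1+\beta_2<1$: you claim that continuity of $g$ on $(-\pi,\pi)$ "follows by dominated convergence". Dominated convergence would require that, for $\lambda\to\lambda_0$, the integrand $g_1(u)g_2(\lambda-u)$ converge to $g_1(u)g_2(\lambda_0-u)$ for a.e.\ $u$, and that in turn needs $g_2^*$ to be continuous at a.e.\ point. The lemma only assumes each $g_i^*$ is bounded and continuous \emph{at the origin}; for a merely measurable bounded $g_2^*$ (say $1+\mathbbm{1}_E$ with $E$ meeting every interval in positive but not full measure) the pointwise convergence fails on a set of positive measure, so the step as written does not go through. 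The conclusion is still true, and the paper's proof shows how to get it: split $g_i=g_{\epsilon,i}+\bar g_{\epsilon,i}$ with $g_{\epsilon,i}=g_i\mathbbm{1}_{(-\epsilon,\epsilon)}$ near the singularity; then $\bar g_{\epsilon,1}\star\bar g_{\epsilon,2}$ is a convolution of bounded (hence square-integrable) periodic functions and is continuous by continuity of translation in $L^2$, while the three remaining cross terms have uniform norm tending to $0$ as $\epsilon\to0$ by the same H\"older estimate you use for boundedness. Thus $g$ is a uniform limit of continuous functions. Some such translation-in-$L^p$ argument is needed; pointwise limits alone cannot work under the stated hypotheses.

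Everything else is correct and essentially parallels the paper. Your boundedness bound via a Beta-type/H\"older estimate and your positivity argument at $0$ are the paper's. In the super-critical case your substitution $u=\lambda t$ is the paper's $v=u/\lambda$; your term $B(\lambda)=O(\lambda)$, which splits off the region where $\lambda(1-t)$ must be reduced modulo $2\pi$, replaces the paper's pointwise domination $|\{(1-v)\}_\lambda|^{-\beta_1}\le|1-|v||^{-\beta_1}$ and reappears correctly as part of the error $O(|\lambda|^{\beta_1+\beta_2-1})$ in the H\"older step (the paper's term $A(\lambda)$); the decomposition of $g_1^*g_2^*-g_1^*(0)g_2^*(0)$ into increments controlled by~(\ref{eq:HbetaL}) and the integrability conditions $\beta<\beta_1+\beta_2-1$ are exactly as in the paper. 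Once the continuity step is repaired as above, the proof is complete.
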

\begin{proof}
By~(\ref{eq:defconv}) and $(2\pi)$-periodicity, we may write
\begin{equation}
  \label{eq:convolPeriodicFormula}
g(\lambda)=\int_{-\pi}^\pi g_1(u)g_2(\lambda-u)\;\rmd u
=\int_{-\pi}^\pi|\{\lambda-u\}|^{-\beta_1}g^*_1(\lambda-u)\;
|u|^{-\beta_2}g^*_2(u)\;
\rmd u \;.
\end{equation}

Let us first consider the case $\beta_1+\beta_2<1$.  We clearly have $g(0)>0$.
To prove that $g$ is bounded, we proceed as in the case of convolutions of
non-periodic functions (see the proof of Lemma~\ref{lem:convolR}), namely, for
$p,q$ such that $\beta_1<1/p<1-\beta_2$ and $1/p+1/q=1$, the H\"older
inequality gives that
\begin{equation}
  \label{eq:boundsommeBetainf1}
\|g\|_\infty^{pq}\leq\|g_1\|_p^{p}\;\|g_2\|_q^{q}
\leq\|g_1^{*}\|_\infty^{p}\;
\|g_2^*\|_\infty^{q}
\int_{-\pi}^\pi|t|^{-p\beta_1}\,\rmd t\times
\int_{-\pi}^\pi|t|^{-q\beta_2}\,\rmd t<\infty\;.
\end{equation}
For any $\epsilon>0$ and $i=1,2$,
let ${g}_{\epsilon,i}$ be the $(2\pi)$-periodic function such that
for all $\lambda\in(-\pi,\pi)$,
${g}_{\epsilon,i}(\lambda)=\1_{(-\epsilon,\epsilon)}(\lambda)
\;g_i(\lambda)$ and let $\bar{g}_{\epsilon,i}=g_i-{g}_{\epsilon,i}$.
Then $g=\bar{g}_{\epsilon,1}\star\bar{g}_{\epsilon,2}+
{g}_{\epsilon,1}\star\bar{g}_{\epsilon,2}+
\bar{g}_{\epsilon,1}\star{g}_{\epsilon,2}+
{g}_{\epsilon,1}\star{g}_{\epsilon,2}$.
Since $\bar{g}_{\epsilon,i}$ is bounded for $i=1,2$, we have
that $\bar{g}_{\epsilon,1}\star\bar{g}_{\epsilon,2}$ is continuous.
On the other hand, using the H\"older
inequality as in~(\ref{eq:boundsommeBetainf1}),
we get that $\|{g}_{\epsilon,1}\star\bar{g}_{\epsilon,2}\|_\infty$,
$\|\bar{g}_{\epsilon,1}\star{g}_{\epsilon,2}\|_\infty$,
$\|\bar{g}_{\epsilon,1}\star\bar{g}_{\epsilon,2}\|_\infty$ tend to zero
as $\epsilon\to0$. Hence $g$ is continuous as well.

We now consider the case $\beta_1+\beta_2\geq1$.
Setting $v=u/\lambda$
in~(\ref{eq:convolPeriodicFormula}), we get, for any
$\lambda\in[-\pi,\pi]\backslash\{0\}$,
$$
g^*(\lambda)=|\lambda|^{\beta_1+\beta_2-1}g(\lambda)=
\int_{\mathbb{R}}
\1_{(-\pi/|\lambda|,\pi/|\lambda|)}(v)\;
|\{(1-v)\}_\lambda|^{-\beta_1}|v|^{-\beta_2}g^*_1(\lambda(1-v))g^*_2(\lambda
v)\;\rmd v\;,
$$
where for any real number $x$ and $\lambda\neq0$, $\{x\}_\lambda$ denotes the
unique element of $[-\pi/|\lambda|,\pi/|\lambda|]$ such that
$x-\{x\}_\lambda\in\mathbb{Z}$. Take now $|\lambda|$ small enough so that
$\pi/|\lambda|>2$. Then, for any $v\in(-\pi/|\lambda|+1,\pi/|\lambda|]$, we
have $|\{(1-v)\}_\lambda|=|1-v|\geq|1-|v||$ and, for any
$v\in(-\pi/|\lambda|,-\pi/|\lambda|+1]$, we have
\begin{equation}
  \label{eq:periodicpartBound}
|\{(1-v)\}_\lambda|=
|1-v-2\pi/|\lambda||=
2\pi/|\lambda|+v-1\geq -v-1 = |1-|v||\;.
\end{equation}
Thus we have
$\1_{(-\pi/|\lambda|,\pi/|\lambda|)}(v)\; |\{(1-v)\}_\lambda|^{-\beta_1}|\leq
|1-|v||^{-\beta_1}$ for all $v\in\mathbb{R}$. We conclude that for $|\lambda|$
small enough, the integrand in the last display is bounded from above by
$|1-|v||^{-\beta_1}|v|^{-\beta_2}\;\|g^*_1\|_\infty\|g^*_2\|_\infty$, which is
integrable on $v\in\mathbb{R}$. Hence $g^*$ is bounded, and by dominated
convergence, as $\lambda\to0$,
\begin{equation}
  \label{eq:gstar}
g^*(\lambda)\to g^*_1(0)g^*_2(0)\;
\int_{\mathbb{R}}|1-v|^{-\beta_1} \;|v|^{-\beta_2}\;\rmd v>0\;.
\end{equation}
We set $g^*(0)$ equal to this limit.

Suppose moreover that $g_1^*, g_2^*$ satisfy~(\ref{eq:HbetaL}).
We take $g_1^*(0)= g_2^*(0)=1$ without loss of generality and denote
$r_i(\lambda)=|g_i^*(\lambda)-1|$ for $i=1,2$.
Then $r(\lambda)=\left|g^*(\lambda)-g^*(0)\right|$, where $g^*(0)$ is defined
as the limit in~(\ref{eq:gstar}) , is at most
$$
\int_{\mathbb{R}}
\left|
\1_{(-\pi/|\lambda|,\pi/|\lambda|)}(v)\;
|\{(1-v)\}_\lambda|^{-\beta_1}|v|^{-\beta_2}g^*_1(\lambda(1-v))g^*_2(\lambda
v)
-|1-v|^{-\beta_1} \;|v|^{-\beta_2}\right|
\;\rmd v\;.
$$
Setting $g_i^*(\lambda)=(g_i^*(\lambda)-1)+1$, we have $r\leq A+B_1+B_2+C$
with
\begin{align*}
A(\lambda)&=
\int_{\mathbb{R}}
\left|
\1_{(-\pi/|\lambda|,\pi/|\lambda|)}(v)\;
|\{(1-v)\}_\lambda|^{-\beta_1}|v|^{-\beta_2}
-|1-v|^{-\beta_1} \;|v|^{-\beta_2}\right|\;\rmd v\;,\\
B_i(\lambda)&=
\int_{\mathbb{R}}
\1_{(-\pi/|\lambda|,\pi/|\lambda|)}(v)\;
|\{(1-v)\}_\lambda|^{-\beta_{j}}|v|^{-\beta_i}r_i(\lambda v)\;\rmd v\;,
\end{align*}
where $(i,j)$ is $(1,2)$ or $(2,1)$, and
$$
C(\lambda)=
\int_{\mathbb{R}}
\1_{(-\pi/|\lambda|,\pi/|\lambda|)}(v)\;
|\{(1-v)\}_\lambda|^{-\beta_1}|v|^{-\beta_2}r_1(\lambda(1-v))r_2(\lambda
v)\;\rmd v\;.
$$
Since $\{(1-v)\}_\lambda=1-v$ for
$v\in[-\pi/|\lambda|+1,\pi/|\lambda|)$ and $\lambda$ large enough, we have
\begin{multline*}
A(\lambda)=\int_{(-\pi/|\lambda|,\pi/|\lambda|)^c}
|1-v|^{-\beta_1} \;|v|^{-\beta_2}\;\rmd v\\
+\int_{-\pi/|\lambda|}^{-\pi/|\lambda|+1}
\left|
|\{(1-v)\}_\lambda|^{-\beta_1}|v|^{-\beta_2}
-|1-v|^{-\beta_1} \;|v|^{-\beta_2}\right|\;\rmd v\;.
\end{multline*}
The first integral is $O(|\lambda|^{\beta_1+\beta_2-1})$.
Using~(\ref{eq:periodicpartBound}), the second line of the last display is less
than
$$
\int_{\pi/|\lambda|-1}^{\pi/|\lambda|}
\left[|1-v|^{-\beta_1}|v|^{-\beta_2}+
|1+v|^{-\beta_1} \;v^{-\beta_2}\right]\;\rmd v=O(|\lambda|^{\beta_1+\beta_2})\;.
$$
We conclude that as $\lambda\to0$, $A(\lambda)=O(|\lambda|^{\beta_1+\beta_2-1})$.
Moreover using that $r_i(\lambda)\leq L|\lambda|^{\beta}$ and $\beta_1+\beta_2-\beta>1$, we have
$B_i(\lambda)=O(|\lambda|^{\beta})$ for $i=1,2$. The same is true for $C$ since
$r_1$ and $r_2$ are also bounded on $\mathbb{R}$.
This achieves the proof.
\end{proof}

\begin{lemma}\label{LemChagtVar}
Let $p$ be a positive integer and $f:\mathbb{R}\to\mathbb{R}_+$. Then, for any $\beta\in\mathbb{R}^{q}$,
  \begin{equation}
    \label{eq:chagtVar}
\int_{\mathbb{R}^q}f(y_1+\cdots+y_q)\;
\prod_{i=1}^{q}|y_i|^{\beta_i}\; \rmd y_1\cdots\rmd y_q=\Gamma\times
\int_{\mathbb{R}} f(s)|s|^{q-1+\beta_1+\cdots+\beta_q}
\rmd s\;,
  \end{equation}
where, for all $i\in\{1,\cdots,q\}$, $B_i=\beta_i+\cdots+\beta_{q}$ and
$$
\Gamma=\prod_{i=2}^{q}
\left(\int_{\mathbb{R}}|t|^{q-i+B_i}|1-t|^{\beta_{i-1}}\rmd t\right)\;.
$$
(We note that $\Gamma$ may be infinite in which case~(\ref{eq:chagtVar}) holds
with the convention $\infty\times0=0$).
\end{lemma}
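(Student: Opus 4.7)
\textbf{Proof plan for Lemma~\ref{LemChagtVar}.}

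My plan is to proceed by induction on $q$, since the claim exhibits a natural recursive structure: the factor indexed by $i=2$ in $\Gamma$ is exactly what arises from peeling off one variable. Since $f\geq0$, Tonelli's theorem lets me freely interchange the order of integration and justifies the convention $\infty\times 0 = 0$.

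The base case $q=1$ is trivial (empty product, $\Gamma=1$). For the inductive step, I first change variables $y_1 \mapsto s = y_1+\cdots+y_q$ (keeping $y_2,\dots,y_q$ fixed), which gives
\[
\int_{\mathbb{R}^q} f(s)\,|s - (y_2+\cdots+y_q)|^{\beta_1}\prod_{i=2}^{q}|y_i|^{\beta_i}\,\rmd s\,\rmd y_2\cdots\rmd y_q\;.
\]
Next I apply the induction hypothesis with $q-1$ variables $y_2,\dots,y_q$ and exponents $\beta_2,\dots,\beta_q$, using the test function $\phi_s(s'):=f(s)\,|s-s'|^{\beta_1}$ in the variable $s'=y_2+\cdots+y_q$. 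Reindexing $i\mapsto i-1$, the constant produced by the induction hypothesis is exactly
\[
\tilde\Gamma=\prod_{i=3}^{q}\int_{\mathbb{R}}|t|^{q-i+B_i}|1-t|^{\beta_{i-1}}\,\rmd t\;,
\]
which already accounts for the $i=3,\dots,q$ factors of $\Gamma$. After this step the remaining integral reads
\[
\tilde\Gamma\int_{\mathbb{R}}\int_{\mathbb{R}}f(s)\,|s-s'|^{\beta_1}\,|s'|^{q-2+B_2}\,\rmd s'\,\rmd s\;.
\]

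The final step is to perform the rescaling $s'=s\,t$ in the inner integral (for $s\neq 0$, which only removes a null set). The Jacobian is $|s|$, and collecting the powers of $|s|$ gives $|s|^{\beta_1 + (q-2+B_2) + 1} = |s|^{q-1+B_1}$ since $B_1=\beta_1+B_2$. The leftover $t$-integral is $\int_{\mathbb{R}}|t|^{q-2+B_2}|1-t|^{\beta_1}\,\rmd t$, which is precisely the $i=2$ factor in $\Gamma$. Multiplying with $\tilde\Gamma$ reconstitutes the full product defining $\Gamma$, and the formula follows.

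The only delicate point, which I expect to be the main obstacle, is bookkeeping the reindexing in the inductive step: one must check that when the induction hypothesis is applied to the $(q-1)$-tuple $(\beta_2,\dots,\beta_q)$, the resulting constant matches the product over $i=3,\dots,q$ in the $q$-variable formula (and that the exponents $q-i+B_i$ and $\beta_{i-1}$ line up under the shift $i'\leftrightarrow i-1$). Everything else is either Tonelli for nonnegative integrands or a one-variable dilation, and both the claim and its proof remain valid when $\Gamma=\infty$ under the stated convention.
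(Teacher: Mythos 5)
Your argument is correct. The reindexing you flag as the delicate point does work out: applying the induction hypothesis to the $(q-1)$-tuple $(\beta_2,\dots,\beta_q)$ produces the factors $\int|t|^{(q-1)-j+B_{j+1}}|1-t|^{\beta_{j+1-1}}\,\rmd t$ for $j=2,\dots,q-1$, which under $i=j+1$ are exactly the $i=3,\dots,q$ factors of $\Gamma$, and the leftover exponent $q-2+B_2$ on $|s'|$ combines with the dilation $s'=st$ to give the $i=2$ factor and the power $|s|^{q-1+B_1}$. Tonelli covers all interchanges and the $\infty\times0$ convention, since every integrand is nonnegative.

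Your route differs from the paper's in organization, though the underlying algebra is the same. The paper proves the identity in one shot with two explicit changes of variables on $\mathbb{R}^q$: first the telescoping substitution $u_i=\sum_{j\geq i}y_j$, then the multiplicative substitution $u_i=\prod_{j\leq i}t_j$, whose Jacobian $\prod_{i=1}^{q-1}|t_i|^{q-i}$ and product expansions must be computed globally before Fubini separates the variables. Your induction unrolls that second substitution into $q-1$ successive one-dimensional dilations $s'=st$, each of which manufactures one $\int|t|^{q-i+B_i}|1-t|^{\beta_{i-1}}\,\rmd t$ factor. What the paper's version buys is a single self-contained computation with no induction hypothesis to track; what yours buys is that each step is an elementary one-variable change of variables with Jacobian $|s|$, trading the multivariate Jacobian calculation for the index bookkeeping you already carried out. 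Both are complete proofs.
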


\begin{proof}
Relation~(\ref{eq:chagtVar}) is obtained by using the following two successive change of variables followed by an application of the Fubini
  Theorem.  Setting, for all $i=1,\cdots,q$, $u_i=\sum_{j=i}^{q}y_j$, we get that $y_i=u_i-u_{i+1}$ for $i<q$ and $y_q=u_q$. Then
  the integral in the left--hand side of~(\ref{eq:chagtVar}) reads
\begin{equation}\label{e:part1}
\int_{\mathbb{R}^q}f(u_{1})
\left[|u_{q}|^{\beta_{q}}
\prod_{i=1}^{q-1}|u_i-u_{i+1}|^{\beta_i}
\right]\;\rmd u_1\cdots\rmd u_q\;.
\end{equation}
The second change of variables consists in setting, for all $i=1,\cdots,q$,
$u_i=\prod_{j=1}^i t_j$. Then
\[
\rmd u_1\cdots\rmd u_q=\left(\prod_{i=1}^{q-1}t_i^{q-i}\right)\rmd t_1\cdots\rmd t_q,
\]
\[
\prod_{i=1}^{q-1}|u_i-u_{i+1}|^{\beta_i}=\prod_{i=1}^{q-1}\left(|t_1\cdots t_i|^{\beta_i}|1-t_{i+1}|^{\beta_i}\right)=\left(\prod_{i=1}^{q-1}|t_i|^{\beta_i+\cdots+\beta_{q-1}}\right)
\left(\prod_{i=2}^{q}|1-t_i|^{\beta_{i-1}}\right),
\]
and $|u_q|=\prod_{i=1}^{q}|t_i|^{\beta_q}$, so that (\ref{e:part1}) becomes
\[
\int_{\mathbb{R}^q}f(t_1)\prod_{i=1}^{q}|t_i|^{\beta_i+\cdots+\beta_{q}+q-i}\prod_{i=2}^{q}|1-t_i|^{\beta_{i-1}}\rmd t_1\cdots\rmd t_q\,,
\]
which by Fubini Theorem yields the required result.

\end{proof}

\begin{lemma}\label{LemInt}
  Let $a\in\mathbb{R}$ and $q$ be a positive integer.
  Let $\beta=(\beta_1,\cdots,\beta_q)\in (-\infty,1)^q$,
  $M_1>0$ and $M_2>-1$ such that $M_2-M_1<-1$. Assume that
  $q+M_2-M_1<\sum_{i=1}^{q}\beta_i$, and that for any
  $\ell\in\{1,\cdots,q-1\}$, $\sum\limits_{i=\ell}^{q} \beta_i> q-\ell$.  Set for any $a\in\mathbb{R}$,
\[
J_q(a;M_1,M_2;\beta)=\int_{\mathbb{R}^{q}}
\frac{|\Sigma_q(\zeta)-a |^{M_2}}
{(1+|\Sigma_q(\zeta)-a|)^{M_1}\prod\limits_{i=1}^{q}|\zeta_i|^{\beta_i}}
\;\rmd\zeta.
\]
Then one has
\begin{equation}\label{e:Jq1}
\sup_{a\in\mathbb{R}}(1+|a|)^{1-q+\sum_{i=1}^q\beta_i}J_q(a;M_1,M_2;\beta)<\infty\;.
\end{equation}
In particular,
\[
J_q(0;M_1,M_2;\beta)<\infty,
\]
and
\[
J_q(a;M_1,M_2;\beta)=O(|a|^{-(1-q+\sum_{i=1}^q\beta_i)})\quad\mbox{ as }a\to\infty\;.
\]
\end{lemma}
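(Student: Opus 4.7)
The approach is to reduce the $q$-dimensional integral $J_q(a;M_1,M_2;\beta)$ to a one-dimensional integral by applying Lemma~\ref{LemChagtVar}, and then to estimate the resulting one-dimensional integral by a careful region decomposition of $\mathbb{R}$ according to the relative sizes of $|\zeta|$, $|\zeta-a|$, and $|a|$.

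\emph{Reduction to one dimension.} The integrand of $J_q(a;M_1,M_2;\beta)$ has the form $f(\Sigma_q(\zeta))\prod_{i=1}^q |\zeta_i|^{-\beta_i}$ with $f(s)=|s-a|^{M_2}(1+|s-a|)^{-M_1}\ge 0$. Applying Lemma~\ref{LemChagtVar} with its $\beta_i$ replaced by $-\beta_i$ yields
\[
J_q(a;M_1,M_2;\beta)=\Gamma\cdot J_1(a;M_1,M_2;\beta'),\qquad \beta':=1-q+\sum_{i=1}^q \beta_i,
\]
where $\Gamma=\prod_{i=2}^q \int_\mathbb{R}|t|^{q-i-\sum_{j\ge i}\beta_j}|1-t|^{-\beta_{i-1}}\rmd t$. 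Each factor of $\Gamma$ is finite: near $t=0$ the exponent exceeds $-1$ since each $\beta_j<1$; near $t=1$ the exponent $-\beta_{i-1}>-1$ by $\beta_{i-1}<1$; and at infinity the exponent is $<-1$ exactly when $\sum_{j\ge i-1}\beta_j>q-i+1$, which is the hypothesis applied with $\ell=i-1\in\{1,\dots,q-1\}$. Consequently $\Gamma<\infty$, and the hypotheses $\beta_i<1$ and $q+M_2-M_1<\sum\beta_i$ translate to $\beta'\in(1+M_2-M_1,\,1)$.

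\emph{One-dimensional estimate.} The remaining task is to prove $\sup_a (1+|a|)^{\beta'}J_1(a;M_1,M_2;\beta')<\infty$. For $|a|\le 1$, uniform boundedness follows from integrability: the integrand of $J_1$ has singularities of order $|\zeta-a|^{M_2}$ (handled by $M_2>-1$) and $|\zeta|^{-\beta'}$ (handled by $\beta'<1$), while at infinity it behaves like $|\zeta|^{M_2-M_1-\beta'}$ (handled by $\beta'>M_2-M_1+1$). For $|a|\ge 1$, partition $\mathbb{R}$ into the three regions $(A)\,|\zeta|\le |a|/2$, $(B)\,|\zeta-a|\le |a|/2$, and $(C)\,\min(|\zeta|,|\zeta-a|)>|a|/2$. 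On (A), $|\zeta-a|\asymp|a|$ produces a factor $O(|a|^{M_2-M_1})$ and integrating $|\zeta|^{-\beta'}$ on $[-|a|/2,|a|/2]$ contributes $O(|a|^{1-\beta'})$, for a total of $O(|a|^{M_2-M_1+1-\beta'})$. On (B), $|\zeta|\asymp|a|$ gives $|\zeta|^{-\beta'}=O(|a|^{-\beta'})$, and the substitution $u=\zeta-a$ leaves $\int|u|^{M_2}(1+|u|)^{-M_1}\rmd u<\infty$ (finite by $M_2>-1$ and $M_2-M_1<-1$), for a total of $O(|a|^{-\beta'})$. Region (C), split further by whether $|\zeta|\le 2|a|$ or $|\zeta|>2|a|$, contributes $O(|a|^{M_2-M_1+1-\beta'})$ in each piece; the outer tail $|\zeta|>2|a|$ is convergent at infinity thanks to $\beta'>M_2-M_1+1$. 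Since $M_2-M_1+1<0$, region (B) dominates and yields $J_1(a)=O(|a|^{-\beta'})$ for $|a|\ge 1$.

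\emph{Conclusion and main obstacle.} Combining the reduction identity with the one-dimensional bound gives $(1+|a|)^{\beta'}J_q(a)\le \Gamma\cdot C$ uniformly in $a$, which is exactly (\ref{e:Jq1}) since $\beta'=1-q+\sum\beta_i$. The particular cases $J_q(0)<\infty$ and $J_q(a)=O(|a|^{-(1-q+\sum\beta_i)})$ as $|a|\to\infty$ follow immediately. The most delicate part of the argument is the region (C) bound in the one-dimensional step: controlling the tail where both $|\zeta|$ and $|\zeta-a|$ are large is precisely where the strict inequality $\sum\beta_i>q+M_2-M_1$ (equivalently $\beta'>M_2-M_1+1$) becomes essential, paired with $M_2-M_1<-1$, which ensures the auxiliary integral $\int|u|^{M_2}(1+|u|)^{-M_1}\rmd u$ used in region (B) is finite. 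The reduction via Lemma~\ref{LemChagtVar} is what makes the argument short; a naive induction on $q$ integrating one variable at a time would produce bounds that no longer fit the one-parameter family appearing in the statement.
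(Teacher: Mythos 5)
Your proof is correct and follows essentially the same route as the paper's: both reduce $J_q$ to a one-dimensional integral via Lemma~\ref{LemChagtVar} (your check that $\Gamma<\infty$ just makes explicit what the paper asserts) and then bound that integral by isolating the contributions near $0$, near $a$, and at infinity. The paper carries out this decomposition after the rescaling $v=s/a$ while you work in the original variable, but the resulting estimates and the role of each hypothesis are the same.
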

\begin{proof}
Since $J_q(a;M_1,M_2;\beta_1,\cdots,\beta_q)=J_q(-a;M_1,M_2;\beta)$, we may suppose $a\geq0$. By Lemma~\ref{LemChagtVar},
\[
J_q(a;M_1,M_2;\beta_1,\cdots,\beta_q)=\Gamma\int_{\mathbb{R}}
\frac{|s-a|^{M_2}|s|^{q-1-(\beta_1+\cdots+\beta_q)}}{(1+|s-a |)^{M_1}}\rmd s
\]
where
\[
\Gamma=\prod_{i=2}^{q}\int_{\mathbb{R}}\frac{\rmd t}{|t|^{\beta_i+\cdots+\beta_q-(q-i)}|1-t|^{\beta_{i-1}}}\;.
\]
The conditions on $\beta_i$'s, $M_1$ and $M_2$ imply
$J_q(a;M_1,M_2;\beta_1,\cdots,\beta_q)<\infty$ for all $a$. To obtain the sup on $a>0$, we
set $v=s/a$. Then, denoting $S=\sum_{i=1}^q\beta_i$, we get
\begin{equation}\label{e:Jq2}
J_q(a;M_1,M_2;\beta)= C a^{q+M_2-S}\; \int_{\mathbb{R}} |v-1|^{M_2}(1+a|v-1|)^{-M_1} |v|^{-S+(q-1)} \rmd v\;,
\end{equation}
where $C$ is a positive constant. We separate the integration domain in two. Suppose first that
$|v-1|\leq a^{-1}$. Then in this case we have $(1+a|v-1|)^{-M_1}\leq 1$. Since $|v|$ is bounded on the interval $|v-1|<a^{-1}$ for $a$ large then as $a\to\infty$,
$$
\int_{|v-1|\leq a^{-1}} |v-1|^{M_2}(1+a|v-1|)^{-M_1} |v|^{-S+(q-1)} \rmd v = O\left(\int_{|v-1|\leq a^{-1}}|v-1|^{M_2}\rmd v\right)=O(a^{-1-M_2})\;.
$$
Now suppose that $|v-1>a^{-1}|$. Then $(1+a|v-1|)^{-M_1}\leq (a|v-1|)^{-M_1}$, and
\[
\begin{array}{lll}
I&=&\int_{|v-1|> a^{-1}} |v-1|^{M_2}(1+a|v-1|)^{-M_1} |v|^{-S+(q-1)} \rmd v \\
&\leq&a^{-M_1} \int_{|v-1|> a^{-1}} |v-1|^{M_2-M_1}|v|^{-S+(q-1)} \rmd v\\
&=&a^{-M_1}\left(\int_{|v|\geq 2}|v-1|^{M_2-M_1}|v|^{-S+(q-1)} \rmd v+\int_{1/2\leq |v|\leq 2,|v-1|> a^{-1}}|v-1|^{M_2-M_1}|v|^{-S+(q-1)} \rmd v\right)\\
&&+a^{-M_1}\int_{|v|\leq 1/2,|v-1|> a^{-1}}|v-1|^{M_2-M_1}|v|^{-S+(q-1)} \rmd v\;.
\end{array}
\]
The first integral concentrates around $v=\infty$, the second around $v=1$ and the third around $v=0$. The first integral is bounded, the second is
\[
O(\int_{|v-1|> a^{-1}}|v-1|^{M_2-M_1}\rmd v)=O(a^{M_1-M_2-1}),\quad\mbox{ as }a\to\infty,
\]
and the third is bounded. Therefore we get
\[
I=O(a^{-M_1})+O(a^{-M_2-1}),
\]
since $M_2-M_1<-1$. Thus (\ref{e:Jq2}) gives
\[
J_q(a;M_1,M_2;\beta)=O(a^{-1+q-S})\quad\mbox{ as }a\to\infty,
\]
yielding the bound~(\ref{e:Jq1}).
\end{proof}

{\bf Acknowledgements.} Murad~S.Taqqu was supported in part by the NSF grants DMS--0608669 and DMS--1007616 at Boston University.
\bibliographystyle{elsarticle-num}
\bibliography{lrd}
\end{document}